\newcommand{\beal}{\begin{align}}
\newcommand{\enal}{\end{align}}
\newcommand{\bealn}{\begin{align*}}
\newcommand{\enaln}{\end{align*}}
\newcommand{\bear}{\begin{eqnarray}}
\newcommand{\eear}{\end{eqnarray}}
\newcommand{\beeq}{\begin{equation}}
\newcommand{\eneq}{\end{equation}}
\newcommand{\eps}{{\varepsilon}}
\newcommand{\R}{{\mathbb R}}
\def\bm{\left[ \begin{array}{cc}}
\def\endm{\end{array}\right]}
\def\bm{\left[\begin{matrix} }
\def\endm{\end{matrix}\right]}
\def\R{{\mathbb R}}
\newcommand{\calQ}{{\mathcal Q}}
\newcommand{\calV}{{\mathcal V}}
\newcommand{\calH}{{\mathcal H}}
\newcommand{\calVs}{{\mathcal V_{\sigma}}}
\newcommand{\calVa}{{\mathcal{V}'_{\sigma}}}
\newtheorem{theorem}{Theorem}
\newtheorem{defi}[theorem]{Definition}
\newtheorem{proposition}[theorem]{Proposition}
\newtheorem{remark}[theorem]{Remark}
\renewcommand{\epsilon}{\eps}
\renewcommand{\tilde}{\widetilde}
\numberwithin{equation}{section}
\numberwithin{theorem}{section}
\begin{document}
\title[Small-time  stabilization of  Navier--Stokes  equations]{Small-time local stabilization of the two dimensional incompressible Navier--Stokes  equations}
\author{Shengquan Xiang}
\address{Bâtiment des Mathématiques, EPFL\\
 Station 8, CH-1015 Lausanne, Switzerland}
\email{\texttt{shengquan.xiang@epfl.ch.}}
\subjclass[2010]{35Q30,  	
35S15,    
93D15.   	
}
\thanks{\textit{Keywords.} Cost, finite time stabilization,  null controllability, quantitative,  spectral estimate.}
\begin{abstract} 
We provide  explicit time-varying  feedback laws that locally stabilize the two dimensional internal controlled  incompressible Navier--Stokes equations in arbitrarily small time. We also obtain quantitative  rapid stabilization  via stationary feedback laws, as well as quantitative null controllability  with explicit controls having  $e^{C/T}$ costs.
\end{abstract}

\maketitle
\section{introduction}
Let $\Omega$ be a bounded connected open set  in $\mathbb{R}^2$ with smooth boundary.    Let the controlled domain  $\omega\subset \Omega$ be a nonempty  open subset.  We are interested in the stabilization and null controllability of the two dimensional incompressible  Navier-Stokes system with internal control,
\begin{gather}
\begin{cases}
y_t- \Delta y+ \left(y \cdot  \nabla\right) y+ \nabla p  = 1_{\omega} f  & \textrm{ in } \Omega,  \label{cons1}\\
\textrm{div } y= 0  & \textrm{ in } \Omega, \\
y=0 &\textrm{ on } \partial \Omega,
\end{cases}
\end{gather} 
where,  the state $y(t, \cdot)$ and  the control term $f(t, \cdot)$  are in $\calH$.   We adapt the standard fluid mechanics framework, 
\begin{gather*}
\calH:= \{y\in L^2(\Omega)^2: \; \textrm{div } y=0 \textrm{ in } \Omega, \; y\cdot n=0 \textrm{ on } \partial \Omega\},\\
\calV_{\sigma}:= \{y\in H^1_0(\Omega)^2: \; \textrm{div } y=0 \textrm{ in } \Omega\} \; \textrm{ and } \; \calV:= \{y\in H^1_0(\Omega)^2\},
\end{gather*}
with $\calVs\hookrightarrow \calV\hookrightarrow \calH\hookrightarrow L^2(\Omega)^2\hookrightarrow \calV'\hookrightarrow \calVa$.  We have taken the viscosity coefficient as 1 to simplify the presentation. 

When  dealing with  stabilization problems  the control term $f$ is regarded as a feedback control governed by some ``feedback  application" that depends on  current states and time, $U(t; y)$: 
\begin{equation}\label{cons-f}
f(t,x):= U(t; y(t, x)),
\end{equation}
where the application   $U$   is the so called \textit{time-varying feedback law},
\begin{equation}\label{cons2}
\left\{\begin{array}{cccc}
U:&  \mathbb{R}\times \calH &\to &   \calH
\\
&(t; y)&\mapsto & U(t; y).
\end{array}
\right.
\end{equation}
The \textit{closed-loop system} associated to  the preceding feedback law $U$ is the evolution system \eqref{cons1}--\eqref{cons2}.   A \textit{stationary} feedback law  is such an application only depends on $\calH$.  A $T$-\textit{periodic} feedback law is a time-varying feedback law that is periodic with respect to time, $i.e.$ $U(T+t; y)= U(t; y)$. \\
A \emph{proper} feedback law $U$, roughly speaking,  is some time-varying feedback law such that,    for every $s\in \R$,  and for every $y_0 \in \calH$ as initial state at time $s$, $i. e.$ $ y(s, x)= y_0(x)$, the  closed-loop system \eqref{cons1}--\eqref{cons2} admits a unique solution.  For the closed-loop system with  proper feedback law  we can  define the ``\textit{flow}", $\Phi(t, s; y_0)$, as the state  at time  $t$ of the solution of \eqref{cons1}--\eqref{cons2}   with initial state $y(s, x)= y_0(x)$. \\

The local controllability and stabilization of Navier--Stokes equations have been extensively studied in the literature.  Based on global Carleman estimates introduced by Fursikov--Imanuvilov \cite{Fursikov-Imanuvilov-book-1997}  a nearly complete local exact controllability result is obtained in \cite{FGIP-2004}, other  works include but not limited to \cite{ Coron-Guerrero, Fursikov-1996, Ervedoza-Glass-Guerrero, Lions-Zuazua, Schmidt-Trelat}.   Eventually one can even  control the system locally via  some reduced control terms \cite{Coron-Lissy}.  The global controllability of Navier--Stokes equations with controlling on boundary (namely Lions' problem), which,  different from the cases on Riemannian manifolds \cite{A-S, 1996-Coron-Fursikov-RJMP},  is far away from been answered due to boundary layer difficulties, by far the best results are given by \cite{Coron-Marbach-Sueur, Coron-Marbach-Sueur-Zhang}.

The study on  local exponential stabilization around 0 and other  trajectories of Navier--Stokes equations both with  internal controls or with boundary controls is fruitful, notably based on Riccati type methods by optimal control theory.  For example, \cite{Badra-Takahashi, Barbu-Triggiani} for local exponential stabilization with finite dimensional  internal control (feedback laws);  exponential stabilization by boundary feedback laws \cite{Barbu-MAMS, Fursikov-stab, Raymond-2006}; stabilization around trajectories or  unstable steady states \cite{Barbu-Shirikyan, Kunisch-NS, Raymond-2006-2}, $etc$.  To the best of our knowledge, result on quantitative rapid stabilization or even finite time stabilization of Navier--Stokes equations is extremely limited, we refer to \cite{Coron-Xiang-2018} for a detailed review on these questions.\\

Recently, the author has introduced a method to stabilize the multi-dimensional  heat equations in finite time \cite{Xiang-heat-2020},  which is based on  quantitative rapid stabilization  relying on   \textit{spectral estimates}  and \textit{Lyapunov functionals}, as well as  \textit{piecewise feedback laws}.  Methodologically speaking, the technical spectral estimate is achieved via local Carleman estimates on elliptic operators  up to boundaries (as always fulfilling Hörmander's pseudoconvex condition), starting from the seminal paper \cite{Lebeau-Robbiano-CPDE} these results can be regarded as standard, at least compared to wave type operators;  the Lyapunov functions \cite{Coron-trelat-2004}  aim at finding artfully chosen energy and  multiplier to characterize the variation of the energy from a global point of view without knowing any microlocal information, which have been extensively developed  in the study of hyperbolic systems of conservation laws as well as scattering theory \cite{Bastin-Coron-book, Hayat-2019, Lions-Hilbert, ZhangRapidStab}; the piecewise  (in time)  feedback law is introduced in \cite{2017-Coron-Nguyen-ARMA} to stabilize the one dimensional heat equation in finite time together with the backstepping method,  instead of using stationary feedback laws.  This method  shares several  advantages: 
\begin{itemize}
\item  The designed feedback laws are simple and explicit to be compared with some other stabilization techniques, for instance the powerful Riccati method requires on solving some algebraic nonlinear Riccati equation;
\item  The quantitative rapid stabilization combined with the piecewise continuous feedback law argument  leads to  null controllability  without applying  Lions'  fundamental H.U.M. \cite{Lions-Hilbert}. Moreover, this  constructive approach also  provides explicit (and probably optimal) controlling costs; 
\item  The  feedback laws are stable under perturbation.   As a direct consequence,  the same feedback law can be used to stabilize (rapidly or even in finite time) nonlinear models with satisfying costs.\\
\end{itemize}

Inspired by   \cite{Xiang-heat-2020} we prove the following theorems concerning  quantitative rapid stabilization, local null controllability with cost estimates, and finite time stabilization for  the  two dimensional incompressible  internal controlled  Navier--Stokes equations, the proofs of which are presented in Section \ref{sec-rap}, Section \ref{sec-null}, and Section \ref{sec-finite} respectively.

\begin{theorem}[Quantitative rapid stabilization]\label{int-thm-rap-sta-li}
There exists an effectively computable constant $C_2>0$ such that for any $\lambda> 0$ we can construct an explicit stationary feedback law $\mathcal{F}_{\lambda}: \calH \rightarrow \calH$,  such that   the closed-loop system \eqref{cons1}--\eqref{cons-f}
with the feedback law  $U(t;y):= \mathcal{F}_{\lambda} y$ is locally exponentially stable:
\begin{align}
||\Phi(t, s; y_0)||_{\calH}+ ||\mathcal{F}_{\lambda}\Phi(t, s; y_0)||_{\calH}&\leq 2C_2 e^{C_2\sqrt{\lambda}} e^{-\frac{\lambda}{4} (t-s)}||y_0||_{\calH}, \; \forall \; s\in \R,  \forall  \;t\in [s, +\infty), \notag
\end{align}
for any $ ||y_0||_{\calH}\leq C_2^{-1} e^{-C_2 \sqrt{\lambda}}$.
\end{theorem}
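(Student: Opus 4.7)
The plan is to treat the nonlinear system as a perturbation of the linearized (Stokes) equation and to construct $\mathcal{F}_\lambda$ as a finite-rank Gramian-type feedback acting only on the low-frequency part of the state, in the style of \cite{Xiang-heat-2020}. Let $A=-P\Delta$ be the Stokes operator on $\calH$ ($P$ denoting the Leray projection) and $\{e_k,\mu_k\}_{k\geq 1}$ its eigenbasis, with $\mu_k\to\infty$. For fixed $\lambda>0$ set $E_\lambda:=\mathrm{span}\{e_k:\mu_k\leq 4\lambda\}$, let $\pi_\lambda$ denote the orthogonal projection onto $E_\lambda$, and write $\pi_\lambda^{\perp}:=I-\pi_\lambda$. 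On $E_\lambda^{\perp}$ the natural parabolic dissipation $e^{-\mu_k t}$ already gives decay at rate $\geq 4\lambda$, so the feedback only needs to act on the finite-dimensional subspace $E_\lambda$.

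The key analytic ingredient is a Lebeau--Robbiano-type spectral inequality for the Stokes operator: there exists $C=C(\Omega,\omega)>0$ such that
$$\|y\|_{\calH}\leq Ce^{C\sqrt{\lambda}}\,\|y\|_{L^2(\omega)^2}\qquad\text{for every } y\in E_\lambda.$$
This is obtained by local elliptic Carleman estimates applied to finite sums of Stokes eigenfunctions in the spirit of \cite{Lebeau-Robbiano-CPDE}, combined with the Stokes Carleman calculus of \cite{Fursikov-Imanuvilov-book-1997} to handle the pressure. A standard duality/HUM argument on the finite-dimensional pair $(\pi_\lambda A,\pi_\lambda P(1_\omega\,\cdot))$ then produces a symmetric positive definite Gramian operator $Q_\lambda$ on $E_\lambda$ satisfying $\|Q_\lambda\|+\|Q_\lambda^{-1}\|\leq e^{C\sqrt{\lambda}}$. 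I define $\mathcal{F}_\lambda$ as the finite-rank feedback dictated by the Riccati recipe associated with $Q_\lambda$; it satisfies $\|\mathcal{F}_\lambda\|_{\calH\to\calH}\leq e^{C\sqrt{\lambda}}$ and shifts the spectrum of the closed-loop linearized generator on $E_\lambda$ by at least $2\lambda$ to the left.

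For the linearized closed-loop system I would use the Lyapunov functional
$$V(y):=\langle Q_\lambda^{-1}\pi_\lambda y,\pi_\lambda y\rangle_{\calH}+M\|\pi_\lambda^{\perp} y\|_{\calH}^2,$$
with $M\gg e^{C\sqrt{\lambda}}$ chosen to absorb the off-diagonal coupling between $E_\lambda$ and $E_\lambda^{\perp}$ produced by the localization $1_\omega$ together with the Leray projection. A direct calculation then yields $\dot V\leq -\lambda V$ along linear trajectories; since $V$ is equivalent to $\|y\|_{\calH}^2$ up to a factor $e^{C\sqrt{\lambda}}$, this gives both the announced prefactor and the decay $e^{-\lambda(t-s)/4}$ of the theorem (after trading some slack in the exponent), together with the matching bound on $\mathcal{F}_\lambda\Phi(t,s;y_0)$.

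The main obstacle, and the reason for the threshold $\|y_0\|_{\calH}\leq C_2^{-1}e^{-C_2\sqrt{\lambda}}$, is the bilinear term $(y\cdot\nabla)y$. Along nonlinear trajectories, $\dot V$ picks up an extra contribution which, using the 2D product estimate $\|(y\cdot\nabla)y\|_{\calVa}\lesssim\|y\|_{\calH}\|y\|_{\calVs}$ together with the parabolic regularization hidden in $\dot V$ (the $-\langle Ay,y\rangle$ contribution on $E_\lambda^{\perp}$), is bounded by $Ce^{C\sqrt{\lambda}}\|y\|_{\calH}V(y)$. This is absorbed into the linear dissipation $-\lambda V$ precisely when $\|y\|_{\calH}\lesssim\lambda e^{-C\sqrt{\lambda}}$, matching the smallness scale of the theorem. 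A bootstrap/Gronwall argument then shows that an initial state below this threshold remains so for all $t\geq s$, yielding the claimed global-in-time exponential stability with the advertised constants.
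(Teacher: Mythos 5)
Your overall architecture matches the paper's: a finite-rank feedback acting only on the low Stokes modes, a Lebeau--Robbiano spectral inequality at scale $e^{C\sqrt{\lambda}}$, a weighted Lyapunov functional splitting low and high frequencies, and a smallness threshold $e^{-C\sqrt{\lambda}}$ to absorb the bilinear term. But two points deviate in ways that matter. First, the Gramian/Riccati construction of $\mathcal{F}_\lambda$ is both heavier than necessary and against the point of the construction: once the spectral inequality is known, the symmetric matrix $J_N=\big((e_i,e_j)_{L^2(\omega)^2}\big)_{i,j\le N}$ is already coercive with constant $C_1^{-1}e^{-C_1\sqrt{\lambda}}$, so the paper simply takes $\mathcal{F}_\lambda y=-\gamma_\lambda\mathbb{P}_{N(\lambda)}y$ with the scalar gain $\gamma_\lambda=C_1e^{C_1\sqrt{\lambda}}\lambda$; no Gramian, no Riccati equation, and the low-frequency block then decays at rate $\gamma_\lambda\cdot C_1^{-1}e^{-C_1\sqrt{\lambda}}=\lambda$ by inspection. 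Your route can in principle be made to work, but it reintroduces exactly the machinery the explicit construction is designed to avoid, and you would still need quantitative two-sided bounds on $Q_\lambda$ that ultimately come from the same spectral inequality.

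Second, and this is the step that would actually fail: your Lyapunov weighting is backwards. The off-diagonal coupling produced by $1_\omega$ sits in the \emph{high}-frequency energy identity, as a term of the form $-2\big(\pi_\lambda^{\perp}y,\,1_\omega\mathcal{F}_\lambda \pi_\lambda y\big)_{L^2}$, i.e.\ of size $\Gamma\,\|\pi_\lambda^{\perp}y\|\,\|\pi_\lambda y\|$ with $\Gamma=\|\mathcal{F}_\lambda\|\sim e^{C\sqrt{\lambda}}$. If you multiply the high-frequency part of $V$ by $M\gg e^{C\sqrt{\lambda}}$, this cross term becomes $M\Gamma\,\|\pi_\lambda^{\perp}y\|\,\|\pi_\lambda y\|$, and Young's inequality leaves you with $\tfrac{M\Gamma^2}{\lambda}\|\pi_\lambda y\|^2$ to be absorbed by the low-frequency dissipation, which is only of order $\lambda\|Q_\lambda^{-1}\|\,\|\pi_\lambda y\|^2\le \lambda e^{C\sqrt{\lambda}}\|\pi_\lambda y\|^2$; the required inequality $M\Gamma^2/\lambda\lesssim\lambda e^{C\sqrt{\lambda}}$ is incompatible with $M\gg e^{C\sqrt{\lambda}}$. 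The correct move is the opposite one: put the large weight on the \emph{low}-frequency component (the paper uses $V(y)=\mu_\lambda\|\mathbb{P}_N y\|_{L^2}^2+\|\mathbb{P}_N^{\perp}y\|_{L^2}^2$ with $\mu_\lambda=\gamma_\lambda^2/\lambda^2=C_1^2e^{2C_1\sqrt{\lambda}}$), so that the leftover $\tfrac{\gamma_\lambda^2}{\lambda}\|X_N\|_2^2=\mu_\lambda\lambda\|X_N\|_2^2$ is eaten by the $-2\mu_\lambda\lambda\|X_N\|_2^2$ dissipation of the controlled block. Relatedly, your treatment of the nonlinearity is muddled: the term you must absorb is of the form $\mu_\lambda c_0\|y\|_{L^2}\|\nabla y\|_{L^2}^2$, which cannot be dominated by $\lambda V(y)$ (that would require $\|\nabla y\|^2\lesssim\lambda\|y\|^2$, false in high frequency); it must be absorbed by the surviving gradient dissipation $-\tfrac12\|\nabla y\|_{L^2}^2$, which is precisely what forces the threshold $V^{1/2}\le(8\mu_\lambda c_0)^{-1}$, i.e.\ $\|y_0\|_{\calH}\le r_\lambda\sim C_2^{-1}e^{-C_2\sqrt{\lambda}}$, and which then requires the bootstrap showing $V$ stays below this threshold.
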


\begin{theorem}[Quantitative null controllability with cost estimates]\label{int-thm-null-col}
There exists an effectively computable constant $C_3>0$ such that, for any $T\in (0, 1)$, and  for any $||y_0||_{\calH}\leq e^{-\frac{C_3}{T}}$ we can find an explicit control $f|_{[0, T]}(t, x)$ satisfying 
\begin{equation*}
||  f(t, x)||_{L^{\infty}(0, T; \calH)}\leq   e^{\frac{C_3}{T}} ||y_0||_{\calH},
\end{equation*}
such that the unique solution of  the controlled system \eqref{cons1}  with initial state $y(0, x)= y_0(x)$ and the control $f|_{[0, T]}$  verifies $ y(T, x)=0$.
\end{theorem}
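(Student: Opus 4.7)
The plan is to apply Theorem \ref{int-thm-rap-sta-li} iteratively on a sequence of shrinking subintervals exhausting $[0,T)$, with rapidly increasing decay rates, so that the closed-loop state is driven exactly to zero at $t=T$ while the feedback control remains in the claimed $L^\infty$-ball. This piecewise-in-time feedback strategy mirrors the one pioneered in \cite{Xiang-heat-2020} for the heat equation and avoids any appeal to Lions' HUM.

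Concretely, I would partition $[0,T]$ by $T_n:=T(1-2^{-n})$ for $n\geq 0$ (so $T_n-T_{n-1}=T/2^n$ and $T_n\uparrow T$), fix an absolute constant $A$ depending only on $C_2$, and set $\lambda_n:=A\,4^n/T^2$. On each $[T_{n-1},T_n)$ define the control by $f(t,x):=\mathcal{F}_{\lambda_n}(y(t,x))$, extended by $f(T,x):=0$. Since $C_2\sqrt{\lambda_n}=C_2\sqrt{A}\,2^n/T$ and $\lambda_n(T_n-T_{n-1})/4=A\,2^n/(4T)$, choosing $A$ large enough (say $A\geq(16C_2)^2$) makes Theorem \ref{int-thm-rap-sta-li} on $[T_{n-1},T_n]$ yield the per-step contraction
\[
\|y(T_n)\|_{\calH}\leq 2C_2\exp\Bigl(-\frac{A/4-C_2\sqrt{A}}{T}\,2^n\Bigr)\|y(T_{n-1})\|_{\calH},
\]
whose exponent beats $C_2\sqrt{\lambda_{n+1}}=2C_2\sqrt{A}\,2^n/T$ by a fixed geometric margin. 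Iterating from $\|y_0\|_{\calH}\leq e^{-C_3/T}$ then propagates the smallness hypothesis $\|y(T_{n-1})\|_{\calH}\leq C_2^{-1}e^{-C_2\sqrt{\lambda_n}}$ required at each step, provided $C_3$ is chosen large enough in terms of $C_2$ (using $T<1$ to absorb the fixed logarithmic overhead).

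The control bound is then automatic. Theorem \ref{int-thm-rap-sta-li} gives $\|f(t)\|_{\calH}\leq 2C_2\,e^{C_2\sqrt{\lambda_n}}\|y(T_{n-1})\|_{\calH}$ on $[T_{n-1},T_n]$, and the worst case is $n=1$, where this reads $2C_2\,e^{2C_2\sqrt{A}/T}\|y_0\|_{\calH}\leq e^{C_3/T}\|y_0\|_{\calH}$ for $C_3$ sufficiently large; for $n\geq 2$ the prefactor $e^{C_2\sqrt{\lambda_n}}$ is more than compensated by the already accumulated super-exponential decay of $\|y(T_{n-1})\|_{\calH}$, so the sup over $t\in[0,T)$ is realized on the first interval. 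Passing to the limit $n\to\infty$ and invoking $\calH$-continuity of the closed-loop Leray trajectory yields $y(T)=\lim_{n\to\infty}y(T_n)=0$.

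The main subtlety I anticipate is not the scheduling itself but the verification that the piecewise-feedback closed-loop system is globally well-posed on $[0,T)$ and has a trajectory continuous in $\calH$ across the junction times $T_n$, where the feedback switches abruptly from $\mathcal{F}_{\lambda_n}$ to $\mathcal{F}_{\lambda_{n+1}}$. This amounts to checking that the flow $\Phi(t,s;\cdot)$ provided by Theorem \ref{int-thm-rap-sta-li} can be restarted at $s=T_{n-1}$ with state $y(T_{n-1})\in\calH$ and that the concatenated trajectory is a bona fide weak solution of \eqref{cons1} in the functional framework fixed in the introduction; the time-translation invariance of the closed-loop dynamics makes this a matter of energy bookkeeping and quantitative regularity propagation rather than new analysis.
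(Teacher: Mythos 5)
Your proposal is correct and follows essentially the same route as the paper: the same dyadic partition $T_n=T(1-2^{-n})$, the same schedule $\lambda_n\sim 4^n/T^2$, the same bootstrap propagating the smallness threshold $\|y(T_{n-1})\|_{\calH}\leq C_2^{-1}e^{-C_2\sqrt{\lambda_n}}$ via the rapid-stabilization estimate, and the same observation that the control cost is dominated by the first interval. The only cosmetic difference is that you run the construction for arbitrary $T$ directly, whereas the paper first reduces to $1/T=2^{n_0}$ by switching the control off on a residual subinterval.
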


\begin{theorem}[Small-time local stabilization with explicit feedback laws]\label{int-thm-semi-stab}
For any $T>0$, we find an effectively computable constant $\Lambda_{T}$ and construct an explicit  $T$-periodic proper feedback law $U$ satisfying 
\begin{equation*}
|| U(t; y)||_{\mathcal{H}}\leq \min \{1,  2||y||_{\calH}^{1/2}\}, \; \forall  \;y\in\calH, \; \forall \; t\in \R,
\end{equation*}
 that stabilizes system \eqref{cons1}--\eqref{cons2} in finite time:
\begin{itemize}
\item[(i)] ($2T$ stabilization) $\Phi(2T+ t, t; y_0)=0, \;\;\forall \;t\in \mathbb{R},\; \forall\; ||y_0||_{\calH}\leq \Lambda_{T}$.
\item[(ii)] (Uniform stability)
For every  $\delta> 0$ there exists an effectively computable $\eta> 0$ such that
\begin{equation*}
\big(||(y_0||_{\calH}\leq \eta\big) \Rightarrow \left(||\Phi(t, t'; y_0)||_{\calH}\leq \delta, \;\forall \;  t'\in \R,\; \forall\; t\in ( t', +\infty) \right).
\end{equation*}
\end{itemize}
\end{theorem}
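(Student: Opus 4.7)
The plan is to build the $T$-periodic feedback by gluing infinitely many stationary rapid-stabilization feedbacks $\mathcal{F}_\lambda$ from Theorem~\ref{int-thm-rap-sta-li} on a geometric partition of each period. Fix $T>0$, set $\tau_n := T(1-2^{-n})$ so that $\tau_n\nearrow T$ and $|I_n|:=\tau_{n+1}-\tau_n = T/2^{n+1}$, and choose $\lambda_n \sim K 4^n/T^2$ with $K$ sufficiently large (of order $C_2^2$) so that on each $I_n$ the bound of Theorem~\ref{int-thm-rap-sta-li} yields a contraction factor $e^{-\mu_n}$ with $\mu_n := \lambda_n |I_n|/4 - C_2\sqrt{\lambda_n} - \ln(2C_2) \sim K 2^n/(8T)\to\infty$. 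On $[\tau_n,\tau_{n+1}]$ I define $U(t;y)$ to be a norm-controlled truncation of $\mathcal{F}_{\lambda_n}y$ enforcing $\|U(t;y)\|_\calH \le \min\{1,2\|y\|_\calH^{1/2}\}$ with $U(t;0)=0$, and then extend $T$-periodically in $t$. This produces a proper feedback law that is piecewise autonomous on $\bigcup_n [\tau_n,\tau_{n+1}]$, with standard parabolic well-posedness on each slab providing the flow $\Phi$.

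The heart of the proof is the inductive cascade estimate $\|\Phi(\tau_{n+1},0;y_0)\|_\calH \le e^{-\mu_n}\|\Phi(\tau_n,0;y_0)\|_\calH$, which would yield $\|\Phi(\tau_n,0;y_0)\|_\calH \le \exp\bigl(-\sum_{k<n}\mu_k\bigr)\|y_0\|_\calH \to 0$, and then by continuity of the trajectory in $\calH$, $\Phi(T,0;y_0)=0$. Applying Theorem~\ref{int-thm-rap-sta-li} on $I_n$ requires the admissibility $\|\Phi(\tau_n,0;y_0)\|_\calH \le C_2^{-1}e^{-C_2\sqrt{\lambda_n}}$, which after unfolding the induction becomes $\|y_0\|_\calH \le C_2^{-1}\exp\bigl(\sum_{k<n}\mu_k - C_2\sqrt{\lambda_n}\bigr)$. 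The partial sum $\sum_{k<n}\mu_k \sim K 2^n/(8T)$ dominates $C_2\sqrt{\lambda_n}\sim C_2\sqrt{K}\, 2^n/T$ once $\sqrt{K}>8C_2$, so the exponent grows in $n$ and the binding constraint is $n=0$: this gives an initial-data threshold $\Lambda_T \sim e^{-\mathrm{const}/T}$. The truncation keeping $\|U\|_\calH\le 1$, together with the quantitative smallness of $\|y\|_\calH$, makes the nonlinear term $(y\cdot\nabla)y$ absorbable by the linear decay on each $I_n$ — and critically, Theorem~\ref{int-thm-rap-sta-li} is already stated for the nonlinear closed-loop flow, so I may invoke it directly.

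The $2T$-stabilization claim (i) for an arbitrary start time follows by writing $t' = kT+s$ with $s\in[0,T)$: on $[t',(k+1)T]$ a partial cascade need not drive the state to zero, but a standard $\calH$-energy estimate for Navier--Stokes with $\|U\|_\calH \le 1$ shows the state stays of order $\|y_0\|_\calH$; at $(k+1)T$ a full fresh period begins with admissible data and annihilates the state by $(k+2)T \le t'+2T$, after which $U(t;0)=0$ keeps $\Phi\equiv 0$. Uniform stability (ii) follows from the same energy bound on $[t',t'+2T]$, which gives $\sup\|\Phi\|_\calH \lesssim \|y_0\|_\calH$, so taking $\eta$ small enough enforces $\|\Phi\|_\calH\le\delta$. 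I expect the main obstacle to be controlling the quadratic nonlinearity uniformly across the cascade: the feedback gains $\|\mathcal{F}_{\lambda_n}\|$ blow up exponentially in $\sqrt{\lambda_n}\sim 2^n$, and the admissibility window $e^{-C_2\sqrt{\lambda_n}}$ shrinks correspondingly, so the cascade is viable only because the $\sqrt{\lambda}$ (rather than $\lambda$) scaling of that window — a hallmark of the Lebeau--Robbiano spectral estimate underlying Theorem~\ref{int-thm-rap-sta-li} — leaves just enough room for the super-geometric decay $e^{-\sum\mu_k}$ to outrun the tightening of the admissibility at every step.
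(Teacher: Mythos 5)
Your architecture is the same as the paper's: the same geometric partition of the period ($\tau_n=T(1-2^{-n})$, $\lambda_n\sim 4^n/T^2$), the same Lipschitz truncation of $\mathcal{F}_{\lambda_n}$ to enforce $\|U\|_{\calH}\le\min\{1,2\|y\|_{\calH}^{1/2}\}$, the same cascade/bootstrap showing that the Lebeau--Robbiano scaling $e^{-C\sqrt{\lambda_n}}$ of the admissibility window is outrun by the accumulated decay $e^{-\sum\mu_k}$, and the same reduction of an arbitrary start time to ``survive a partial period, then annihilate over a full one.'' That part is correct and matches Sections 4--5 of the paper.

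There is, however, one step that as written would fail: your justification of the partial-period survival (used for both (i) and (ii)) by ``a standard $\calH$-energy estimate for Navier--Stokes with $\|U\|_{\calH}\le 1$ shows the state stays of order $\|y_0\|_{\calH}$.'' The Leray energy estimate with a forcing bounded only in size gives \emph{additive}, not multiplicative, control: $\|y(t)\|_{\calH}^2\le\|y_0\|_{\calH}^2+C_0(t-s)$, and even using the sharper bound $\|U\|_{\calH}\le 2\|y\|_{\calH}^{1/2}$ one only gets $\|y(t)\|_{\calH}^{1/2}\le\|y_0\|_{\calH}^{1/2}+C(t-s)$. Over a window of length comparable to $T$ this is of order $T$ (or $T^2$), independent of $\|y_0\|_{\calH}$, so it cannot deliver the admissibility $\|\Phi((k+1)T,t';y_0)\|_{\calH}\le e^{-2C_3/T}$ needed to launch the fresh full period, nor the $\eta$--$\delta$ statement in (ii). The paper's Step 3 repairs exactly this: pick $\tilde T<T$ so close to $T$ that the crude additive bound $C_0(T-\tilde T)$ is harmless on the short tail $[\tilde T,T)$, and on $[s,\tilde T)$ --- which meets only \emph{finitely many} of the intervals $I_n$ --- propagate smallness through a finite composition of the locally exponentially stable closed-loop maps of Theorem \ref{thm-rap-sta-li-K}, shrinking the initial threshold $\varepsilon(\tilde\eta)$ accordingly. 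You need this two-regime splitting (or an equivalent quantitative continuity-of-the-flow argument on each slab); the size bound on $U$ alone does not suffice.
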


\section{Preliminary}
\subsection{Functional  framework}\label{sec-spec}
We refer to the book by Chemin \cite{Chemin-book} for the functional analysis framework and well-posedness results concerning Navier-Stokes equations, and the book by Coron \cite{Coron} for introduction on the  related control theory. In the context if there is no confusing sometimes we simply denote $L^2(\Omega)^2$ by $L^2(\Omega)$ or $L^2$.  

\noindent (1) \textit{Leray projection and spectral decomposition.}

According to Helmholtz decomposition, for any $u\in L^2(\Omega)^2$ there exist unique $v\in \calH$ and $\nabla p\in L^2(\Omega)^2$ such that $u= v+ \nabla p$,  which defines the (orthogonal) Leray projection $\mathbb{P}$ on $L^2(\Omega)^2$:
\begin{equation*}
\left\{\begin{array}{cccc}
\mathbb{P}: & L^2(\Omega)^2 &\to &   \calH
\\
&u&\mapsto & \mathbb{P}u:= u-\nabla p.
\end{array}
\right.
\end{equation*}
Notice that for any $f\in \calH$,
\begin{equation*}
||\mathbb{P}\big(1_{\omega}f\big)||_{\calH}\leq ||1_{\omega}f||_{L^2(\Omega)^2}\leq   ||f||_{L^2(\Omega)^2}=||f||_{\calH},
\end{equation*}
which allows us to estimate the control term via $||f||_{L^2}$ (or equivalently $||f||_{\calH}$).

Let $\{e_i\}_{i=1}^{\infty}\subset \calVs$ be the orthonormal basis of $\calH$ given by the eigenvectors of the 
 the Stokes operator 
 \begin{gather*}
\begin{cases}
 -\Delta e_i+ \nabla p_i  = \tau_i e_i & \textrm{ in } \Omega,  \\
\textrm{div } e_i= 0  & \textrm{ in } \Omega, \\
e_i=0 &\textrm{ on } \partial \Omega,
\end{cases}
\end{gather*} 
with $ 0<\tau_1\leq \tau_2\leq \tau_3\leq...\leq \tau_n\leq...$ and $\lim_{i\rightarrow \infty} \tau_i= +\infty$.   Let $\calH_N$ be the low frequency subspace  of $\calH$, and $\mathbb{P}_N$ be its orthogonal projection,
\begin{equation*}
\calH_N:= \textrm{Vect} \{e_i\}_{i=1}^N.
\end{equation*}
In terms of the above eigenvectors  Leray projection can be extended to $\calV'$,
\begin{equation*}
\left\{\begin{array}{cccc}
\mathbb{P}: & \calV' &\to &   \calV'
\\
&u&\mapsto & \mathbb{P} u:= u-\nabla p,
\end{array}
\right.
\end{equation*}
where  $p\in L^2_{loc}(\Omega)$,  and $\nabla p$ belongs to   $\mathcal{V}^0_{\sigma}$ as polar space of $\calVs$,
\begin{equation*}
 \mathcal{V}^0_{\sigma}:= \left\{f\in \calV'  :  \left\langle f, v\right\rangle_{\calV'\times \calV}=0, \;\forall\; v\in \calVs\right\}.
\end{equation*}
More precisely, 
\begin{align*}
\mathbb{P} u&:=  \sum_{i=1}^{\infty} \big\langle u, e_i\big\rangle_{\calV'\times\calV}  e_i \in \calV' \; \;\;  \textrm{ for } \; u\in \calV', \\
 \mathbb{P}_N u&:=  \sum_{i=1}^N \big\langle u, e_i\big\rangle_{\calV'\times\calV}  e_i\in \calVs\; \;\;   \textrm{ for } \; u\in \calV', \\
\mathbb{P} u&:=  \sum_{i=1}^{\infty} \big(u, e_i\big)_{L^2(\Omega)^2} e_i \in \calH \; \;\;  \textrm{ for } \; u\in L^2(\Omega)^2,
\end{align*}
with
\begin{equation*}
\big\langle u, v\big\rangle_{\calV'\times\calV}= \big\langle u, v\big\rangle_{\calVa\times\calVs}=  \big\langle \mathbb{P}u, v\big\rangle_{\calVa\times\calVs},   \;\forall \; u\in \calV', \;\forall \; v\in \calVs.
\end{equation*}
Furthermore,  the related $\calH$-norm,   $\calV$-norm, and   $\calVa$-norm  can be   characterized by  
\begin{align*}
||u||_{\calH}^2&= \sum_{i=1}^{\infty}  \big|\big(u, e_i\big)_{L^2(\Omega)^2}\big|^2 \;\; \;  \textrm{ for } u\in \calH, \\
||u||_{\calV}^2&= \sum_{i=1}^{\infty}  \big|\big(u, e_i\big)_{L^2(\Omega)^2}\big|^2\tau_i \; \;\;  \textrm{ for } u\in \calV, \\
||u||_{\calVa}^2&= ||\mathbb{P}u||_{\calVa}^2= \sum_{i=1}^{\infty}  \big|\big\langle u, e_i\big\rangle_{\calV\times\calV'}\big|^2\tau_i^{-1} \; \; \;  \textrm{ for } u\in \calV'.
\end{align*}

\noindent (2) \textit{Spectral estimates.}

For any  $\lambda>0$, we denote by $N(\lambda)$  the number of the  eigenvalues   that are smaller than or equal to  $\lambda$, $i. e. $ $ \tau_{N(\lambda)}\leq \lambda<\tau_{N(\lambda)+1}$,    
and define the following  symmetric matrix $J_{N(\lambda)}$,
\begin{equation}\label{def-JN}
J_{N(\lambda)}:= \left( \big(e_i, e_j\big)_{L^2(\omega)^2} \right)_{i, j=1}^{N(\lambda)}.   
\end{equation}

\begin{proposition}[Spectral estimates]\label{Key-Lemma} 
There exists an effectively computable constant $C_1\geq 1$ only depends on $(\Omega, \omega)$ that is independent of $\lambda> 0$ such that,  for any $\lambda>0$ and for   any $E_{N(\lambda)}=(a_1, a_2, ..., a_{N(\lambda)})\in \R^{N(\lambda)}$  the following inequality holds,
\begin{equation*}
E_{N(\lambda)}^{T}J_{N(\lambda)} E_{N(\lambda)}\geq C_1^{-1} e^{-C_1\sqrt{\lambda}} ||E_{N(\lambda)}||_2^2.
\end{equation*}
\end{proposition}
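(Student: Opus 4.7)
The plan is to identify Proposition \ref{Key-Lemma} with a Lebeau--Robbiano type spectral inequality for the Stokes operator and to prove that inequality through the classical elliptic-augmentation-plus-Carleman scheme. First I would reformulate the claim: given $E_{N(\lambda)}=(a_1,\dots,a_{N(\lambda)})$, setting $u:=\sum_{i=1}^{N(\lambda)}a_{i}e_{i}\in\calH$ and using the $L^{2}$-orthonormality of $\{e_{i}\}$, one has $\|E_{N(\lambda)}\|_2^2=\|u\|_{L^{2}(\Omega)}^{2}$ and $E_{N(\lambda)}^{T}J_{N(\lambda)}E_{N(\lambda)}=\|u\|_{L^{2}(\omega)}^{2}$, so that the statement is equivalent to the spectral inequality
\[
\|u\|_{L^{2}(\Omega)}^{2}\le C_{1}\,e^{C_{1}\sqrt{\lambda}}\,\|u\|_{L^{2}(\omega)}^{2},\qquad \forall\, u\in\textrm{Vect}\{e_{i}:\tau_{i}\le\lambda\}.
\]

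To establish this inequality I would introduce, on the cylinder $Q:=(-1,1)\times\Omega$, the augmented pair
\[
\Phi(s,x):=\sum_{i=1}^{N(\lambda)}a_{i}\cosh(\sqrt{\tau_{i}}\,s)\,e_{i}(x),\qquad P(s,x):=\sum_{i=1}^{N(\lambda)}a_{i}\cosh(\sqrt{\tau_{i}}\,s)\,p_{i}(x).
\]
Since $-\Delta e_{i}+\nabla p_{i}=\tau_{i}e_{i}$, a direct computation shows that $(\Phi,P)$ solves the stationary Stokes system $\partial_{s}^{2}\Phi+\Delta_{x}\Phi=\nabla_{x}P$, $\textrm{div}_{x}\Phi=0$ in $Q$, with $\Phi=0$ on $(-1,1)\times\partial\Omega$ and $\Phi(0,\cdot)=u$. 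Using $\|\nabla e_{i}\|_{L^{2}}^{2}=\tau_{i}$ together with the crude mode-by-mode bound $\cosh(\sqrt{\tau_{i}}\,s)\le e^{\sqrt{\lambda}}$ on $Q$ one obtains the \emph{a priori} energy estimate $\|\Phi\|_{H^{1}(Q)}\le C\sqrt{\lambda}\,e^{\sqrt{\lambda}}\|u\|_{L^{2}(\Omega)}$.

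I would then apply a global Carleman estimate for the stationary Stokes operator on $Q$ with lateral Dirichlet condition to obtain an interpolation inequality of the form
\[
\|\Phi(0,\cdot)\|_{L^{2}(\Omega)}\le C\,\|\Phi\|_{H^{1}(Q)}^{1-\alpha}\,\|\Phi(0,\cdot)\|_{L^{2}(\omega)}^{\alpha}
\]
for some universal $\alpha\in(0,1)$ depending only on $(\Omega,\omega)$. Inserting the energy bound above and using $\Phi(0,\cdot)=u$ together with a standard Young-type optimization yields the target spectral estimate with a constant of the form $C\,e^{C\sqrt{\lambda}}$, and hence Proposition~\ref{Key-Lemma}.

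The main obstacle is the construction of the Carleman inequality underlying this last interpolation: the pressure coupling rules out a direct application of the scalar Lebeau--Robbiano or Jerison--Lebeau estimates, and since the observation concerns the single slice $\omega\times\{0\}$ rather than a full open neighbourhood of that slice in $Q$, one needs weights concentrating near $s=0$ while simultaneously satisfying H\"ormander's pseudoconvexity condition up to the lateral boundary $(-1,1)\times\partial\Omega$. Two natural routes are available: either appeal directly to Carleman estimates for the stationary Stokes system in the spirit of Fursikov--Imanuvilov and of \cite{FGIP-2004}, or, exploiting the planar setting, write $e_{i}=\nabla^{\perp}\psi_{i}$ with a stream function $\psi_{i}$ solving a clamped-plate biharmonic eigenvalue problem and rely on scalar fourth-order Carleman estimates, finally propagating the resulting local interpolation by the classical three-balls iteration along a chain of balls connecting $\omega$ to $\Omega$ to obtain the global bound with the sharp $\exp(C\sqrt{\lambda})$ dependence.
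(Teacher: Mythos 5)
Your opening reduction is exactly the paper's: by the $L^2$-orthonormality of the Stokes eigenfunctions, $E_{N}^{T}J_{N}E_{N}=\|\sum_i a_i e_i\|_{L^2(\omega)}^2$ and $\|E_N\|_2^2=\|\sum_i a_i e_i\|_{L^2(\Omega)}^2$, so the proposition is precisely a Lebeau--Robbiano type spectral inequality for the Stokes operator. At that point the paper stops and quotes \cite[Theorem 3.1]{CS-Lebeau-2016}, whereas you set out to reprove that theorem by elliptic augmentation. Your outline is the standard (and essentially correct) strategy: the $\cosh$-extension $\Phi$ solves the stationary Stokes system on the cylinder with $\Phi(0,\cdot)=u$ and, automatically, $\partial_s\Phi(0,\cdot)=0$; the energy bound $\|\Phi\|_{H^1(Q)}\leq C(1+\sqrt{\lambda})e^{\sqrt{\lambda}}\|u\|_{L^2(\Omega)}$ is correct since $\|\nabla e_i\|_{L^2}^2=\tau_i$ and the modes are also orthogonal in $H^1_0$; and the final Young-type optimization does convert the interpolation inequality into an $e^{C\sqrt{\lambda}}$ bound with the polynomial factor absorbed. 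What each approach buys is clear: the citation keeps the paper short and delegates the pressure difficulty entirely, while your route exposes where $C_1$ comes from and why it is effectively computable.

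The caveat is that your argument is not self-contained at exactly the point where all the difficulty sits: the interpolation inequality $\|\Phi(0,\cdot)\|_{L^{2}(\Omega)}\le C\|\Phi\|_{H^{1}(Q)}^{1-\alpha}\|\Phi(0,\cdot)\|_{L^{2}(\omega)}^{\alpha}$, observed from Cauchy data on the interior slice $\{0\}\times\omega$, is asserted with two ``natural routes'' sketched but neither carried out. That inequality is the actual content of \cite{CS-Lebeau-2016}; the pressure is not a lower-order perturbation, so the scalar Jerison--Lebeau machinery does not apply verbatim, as you correctly note. Your planar stream-function alternative also needs a correction: with $e_i=\nabla^{\perp}\psi_i$ one obtains $\Delta^{2}\psi_i=-\tau_i\Delta\psi_i$ with $\psi_i=\partial_n\psi_i=0$, a buckling-type rather than clamped-plate eigenvalue problem, and one must still check that the fourth-order Carleman/three-balls iteration returns control of $\|\nabla^{\perp}\psi\|_{L^2(\omega)}$ (the quantity appearing in $J_N$) rather than of $\psi$ itself. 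In short: same reduction as the paper, then a genuinely more ambitious route whose key lemma remains a black box --- acceptable if, like the author, you ultimately cite \cite{CS-Lebeau-2016} for it, but not yet a complete independent proof.
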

\begin{proof}
This is a Lebeau--Robbiano type spectral inequality  \cite{Lebeau-Robbiano-CPDE}.
By letting $N$ be presenting $N_{\lambda}$,  we get
\begin{equation*}
E_N^{T}J_N E_N= \sum_{1\leq i, j\leq N} a_i \big(e_i, e_j\big)_{L^2(\omega)^2} a_j= ||\sum_{i=1}^N a_i e_i||_{L^2(\omega)^2}^2\geq C_1^{-1} e^{-C_1\sqrt{\lambda}}||E_N||_2^2,
\end{equation*}
where we have recalled  the recent paper \cite[Theorem 3.1]{CS-Lebeau-2016},
\[ C_1e^{C_1\sqrt{\lambda}} \int_{\omega}\left( \sum_{\tau_i\leq \lambda} a_i e_i(x)\right)^2 dx \geq \sum_{\tau_i\leq \lambda} a_i^2. \]
\end{proof}
\noindent (3) \textit{Nonlinear terms.}

Next, we define the following  bilinear map $\calQ$  as well as  trilinear functional  $\mathcal{B}$,
\begin{gather*}
\left\{\begin{array}{cccc}
\mathcal{Q}:& \calV\times \calV &\to &   \calV'
\\
& (u, v)&\mapsto & -\textrm{div }(u\otimes v)
\end{array}, \label{Q} 
\right.\\
\mathcal{B}(u, v, w):= \big\langle \mathcal{Q}(u, v), w\big\rangle_{\calV'\times \calV}= \int_{\Omega} [(u\cdot \nabla) v]\cdot w dx, \;\forall \; u, v, w \in \calV.
\end{gather*}

\begin{proposition}[Nonlinearity estimates]\label{non-es-Q}
There exists a constant $c_0$ such that for any $u, v$ and $w$ in $\calV$, we have the following estimates,
\begin{gather*}
\mathcal{B}(u, v, w)+ \mathcal{B}(u, w, v)=0 \;\textrm{ if } u\in \calVs, \\
|\mathcal{B}(u, v, w)|\leq c_0 ||u||_{L^2}^{\frac{1}{2}} ||v||_{L^2}^{\frac{1}{2}} ||\nabla u||_{L^2}^{\frac{1}{2}} ||\nabla v||_{L^2}^{\frac{1}{2}}||\nabla w||_{L^2}.
\end{gather*}
\end{proposition}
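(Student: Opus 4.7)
\medskip

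\noindent\textbf{Proof proposal.} The plan is to handle the two assertions separately. The first is a one-line integration by parts, while the second combines Hölder's inequality with the two-dimensional Ladyzhenskaya inequality $\|f\|_{L^4}^2 \le c\,\|f\|_{L^2}\|\nabla f\|_{L^2}$ for $f\in H^1_0(\Omega)$, but requires a preliminary transposition of derivatives.

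For the antisymmetry identity I would start from the pointwise Leibniz rule
\[
(u\cdot\nabla)(v\cdot w) \;=\; \bigl((u\cdot\nabla) v\bigr)\cdot w \;+\; \bigl((u\cdot\nabla) w\bigr)\cdot v,
\]
which gives $\mathcal{B}(u,v,w)+\mathcal{B}(u,w,v)=\int_\Omega (u\cdot\nabla)(v\cdot w)\,dx$. Since $u\in\calVs\subset H^1_0(\Omega)^2$, the boundary terms from integration by parts vanish, and the remaining bulk integral equals $-\int_\Omega (\mathrm{div}\,u)\,(v\cdot w)\,dx$, which is zero by $\mathrm{div}\,u=0$.

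For the trilinear bound, the key observation is that a naive Hölder estimate, $|\mathcal{B}(u,v,w)|\le \|u\|_{L^4}\|\nabla v\|_{L^2}\|w\|_{L^4}$, places the full gradient on the wrong factor: the target puts it on $w$, not on $v$. So I would first integrate by parts to move the derivative from $v$ onto $w$, obtaining
\[
\mathcal{B}(u,v,w) \;=\; -\mathcal{B}(u,w,v) \;-\; \int_\Omega (\mathrm{div}\,u)\,(v\cdot w)\,dx,
\]
the second term vanishing in the main application where $u\in\calVs$. Now Hölder on the first term yields $|\mathcal{B}(u,w,v)|\le \|u\|_{L^4}\|\nabla w\|_{L^2}\|v\|_{L^4}$, and applying Ladyzhenskaya to both $u$ and $v$ produces exactly
\[
c_0\,\|u\|_{L^2}^{1/2}\|\nabla u\|_{L^2}^{1/2}\,\|v\|_{L^2}^{1/2}\|\nabla v\|_{L^2}^{1/2}\,\|\nabla w\|_{L^2}.
\]
The divergence contribution, when present, is controlled by $\|\nabla u\|_{L^2}\|v\|_{L^4}\|w\|_{L^4}$; invoking Poincaré on $w\in H^1_0$ to collapse $\|w\|_{L^2}^{1/2}\|\nabla w\|_{L^2}^{1/2}$ into $\|\nabla w\|_{L^2}$ brings this remainder term into the same shape as the bound above, up to the Poincaré constant for $\Omega$.

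The only conceptual obstacle is recognizing that the desired estimate \emph{cannot} be obtained by direct Hölder plus 2D Sobolev interpolation, because that would leave $\|\nabla v\|_{L^2}$ (full) and $\|w\|_{L^2}^{1/2}\|\nabla w\|_{L^2}^{1/2}$ (half) in place of $\|v\|_{L^2}^{1/2}\|\nabla v\|_{L^2}^{1/2}$ (half) and $\|\nabla w\|_{L^2}$ (full). Swapping the last two factors in the trilinear form via integration by parts is precisely what permits this finer splitting; once this is done, the rest is routine.
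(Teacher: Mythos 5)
Your proof of the antisymmetry identity is correct, and your derivation of the trilinear bound via the transposition
$\mathcal{B}(u,v,w)=-\mathcal{B}(u,w,v)-\int_\Omega(\mathrm{div}\,u)(v\cdot w)\,dx$
followed by H\"older and the two--dimensional Ladyzhenskaya inequality is exactly the classical route (the paper itself gives no proof of this proposition, treating it as a standard nonlinearity estimate from the Navier--Stokes literature). Your argument establishes the stated bound whenever $\mathrm{div}\,u=0$, which covers every invocation of the proposition in the paper: there one always takes $u=y\in\calVs$, so the remainder term is absent.

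However, your treatment of the divergence remainder for general $u\in\calV$ does not close. You bound it by $\|\nabla u\|_{L^2}\|v\|_{L^4}\|w\|_{L^4}$ and invoke Poincar\'e on $w$; that indeed converts $\|w\|_{L^2}^{1/2}\|\nabla w\|_{L^2}^{1/2}$ into $\|\nabla w\|_{L^2}$, but it leaves the factor $\|\nabla u\|_{L^2}$ at full strength, whereas the target contains only $\|u\|_{L^2}^{1/2}\|\nabla u\|_{L^2}^{1/2}$. Converting the former into the latter would require $\|\nabla u\|_{L^2}\lesssim\|u\|_{L^2}$, a reverse Poincar\'e inequality which is false (take $u$ highly oscillatory). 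So this term is \emph{not} ``in the same shape as the bound above,'' and your proof of the second inequality is valid only for solenoidal $u$. Whether the inequality as literally stated (for all $u\in\calV$) is even true is a more delicate matter --- every obvious H\"older arrangement loses a factor measuring the oscillation of $u$ or $v$ relative to $w$ --- but since the paper only ever applies the estimate with $u\in\calVs$, the honest fix is either to restrict the hypothesis of the second estimate to $u\in\calVs$ (where your argument is complete) or to supply a genuinely different argument for the $\mathrm{div}\,u$ remainder.
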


\subsection{Open loop controlled (inhomogenous) Navier-Stokes systems}
The open loop controlled equation is indeed an inhomogenous equation with a force term  located in the controlled domain.  A general  inhomogenous equation  (without any restriction on force terms) is presented by,
\begin{equation}\label{Cauch-NS}
\begin{cases}
y_t- \Delta y+ \left(y\cdot \nabla\right) y+ \nabla p=  f(t, x), &  (t, x)\in (t_1, t_2)\times \Omega,\\
\textrm{div }y(t, x)=0, & (t, x)\in (t_1, t_2)\times \Omega,\\
y(t, x)=0, & (t, x)\in (t_1, t_2)\times \partial \Omega,\\
y(t_1, x)= y_0(x),  & x\in  \Omega,
\end{cases}
\end{equation} 
where $t_2$ can be taken as $+\infty$.  We are interested in   the solutions under  Leray's weak solution sense \cite{Leray}: for any $y_0\in \calH$ and any $f\in L^2_{loc}(t_1, t_2; \calV')$, the solution of equation \eqref{Cauch-NS} is some $y\in C([t_1, t_2]; \calH)\cap L^2_{loc}(t_1, t_2; \calVs)$ such that, for any test function $\phi$ in $C^1([t_1, t_2]; \calVs)$, the vector field $y$ satisfies the following condition: 
\begin{gather}
\big( y(t), \phi(t)\big)_{\calH}=  \big( y_0, \phi(0)\big)_{\calH}+ \int_{t_1}^t\big\langle \Delta \phi(s)+ \partial_t\phi(s), y(s)\big\rangle_{\calVa\times\calVs} ds \;\;\;\;\;\;\;\;\;\;\;\;\;\;\;\;\;\;\;\;\;\;\;\;\;\notag \\
\;\;\;\;\;\;\;\;\;\;\;\;\;\;\;\;\;\;\;\;\;\;\;\;\;\;\;\;\;\;\;\;\;\;+ \int_{t_1}^t \big(y(s)\otimes y(s), \nabla \phi(s)\big)_{L^2(\Omega)^2} ds+ \int_{t_1}^t \big\langle f(s), \phi(s)\big\rangle_{\calVa\times \calVs} ds,
\end{gather}
for every $ t\in [t_1, t_2]$.

\begin{theorem}[Leray theorem on well-posedness and stability of the solutions]\label{Thm-well-stab}
For any $y_0\in \calH$ and any $f\in L^2_{loc}(t_1, t_2; L^2(\Omega)^2)$, the Cauchy problem \eqref{Cauch-NS} admits a unique solution. This unique solution is also in $H^1_{loc}(t_1, t_2; \calV')$. Moreover,  there exists some constant $C_0$ independent of $t_1$ and $t_2$ such that  this unique solution satisfies, 
\begin{align}
\frac{1}{2}||y(t, x)||_{\calH}^2+ \int_{t_1}^t||\nabla y(s, x)||_{L^2}^2 ds&= \frac{1}{2}||y_0||_{\calH}^2+\int_{t_1}^{t}\big\langle f(s), y(s)\big\rangle_{\calVa\times \calVs} ds, \\
||y(t, x)||_{\calH}^2+ \int_{t_1}^t||\nabla y(s, x)||_{L^2}^2 ds&\leq ||y_0||_{\calH}^2+ C_0 \int_{t_1}^t||f(s)||_{L^2}^2 ds,
\end{align}
for any $t\in [t_1, t_2]$.

Furthermore, the Leray solutions are stable in the following sense. Let $y$ (resp. $z$) be the Leray solution associated with $y_0$ (resp. $z_0$) in $\calH$ and $f$ (resp. $g$) in the space $L^2_{loc}(t_1, +\infty; L^2(\Omega)^2)$, then for $w:= y-z$ and for  any $t\in (t_1, +\infty)$ we have 
\begin{gather*}
||w(t)||_{\calH}^2+  \int_{t_1}^t||\nabla w(s, x)||_{L^2}^2 ds\leq \left(||w_0||_{\calH}^2+ C_0 \int_{t_1}^t||(f-g)(s)||_{L^2}^2 ds\right) \exp (C E^2(t)), \\
E(t):= \min \left\{||y_0||_{\calH}^2+  C_0\int_{t_1}^t ||f(s)||_{L^2}^2 ds,  ||z_0||_{\calH}^2+ C_0\int_{t_1}^t ||g(s)||_{L^2}^2 ds \right\}.
\end{gather*}
\end{theorem}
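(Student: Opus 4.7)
The plan is to follow the classical Leray program adapted to two spatial dimensions, using the sharp trilinear bound of Proposition \ref{non-es-Q} as the crucial ingredient that promotes weak existence to uniqueness and to an energy identity. For existence, I would run a Galerkin approximation in the Stokes eigenbasis $\{e_i\}$: let $y_N \in \calH_N$ solve the system of ODEs obtained by projecting \eqref{Cauch-NS} through $\mathbb{P}_N$. Local solvability is Cauchy--Lipschitz; testing the projected equation against $y_N$ and using $\mathcal{B}(y_N, y_N, y_N)=0$ (valid because $y_N \in \calVs$) yields the Galerkin energy identity
\[
\tfrac{1}{2}\|y_N(t)\|_{\calH}^2 + \int_{t_1}^t \|\nabla y_N\|_{L^2}^2\, ds = \tfrac{1}{2}\|\mathbb{P}_N y_0\|_{\calH}^2 + \int_{t_1}^t (\mathbb{P}_N f, y_N)_{\calH}\, ds,
\]
from which Cauchy--Schwarz and Young absorb the force term and supply uniform bounds of $y_N$ in $L^\infty(t_1,t_2;\calH)\cap L^2_{loc}(t_1,t_2;\calVs)$, hence global existence of the approximants.

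Using Proposition \ref{non-es-Q} in the form $|\mathcal{B}(y_N, y_N, \phi)|\leq c_0\|y_N\|_{L^2}\|\nabla y_N\|_{L^2}\|\nabla\phi\|_{L^2}$, I would bound $\partial_t y_N$ uniformly in $L^2_{loc}(t_1,t_2;\calVa)$. Aubin--Lions compactness then delivers, along a subsequence, strong convergence $y_N \to y$ in $L^2_{loc}(t_1,t_2;\calH)$, weak convergence in $L^2_{loc}(t_1,t_2;\calVs)$, and weak-$\ast$ convergence in $L^\infty(t_1,t_2;\calH)$. The strong $L^2$ convergence is exactly what is needed to identify the limit of the quadratic term $y_N\otimes y_N$ tested against $\phi\in C^1([t_1,t_2];\calVs)$, so the limit $y$ satisfies the weak formulation. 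The energy inequality is inherited by weak lower semicontinuity of $\|\nabla\cdot\|_{L^2}^2$.

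For the stability bound (which will contain uniqueness and the energy equality as special cases), set $w:=y-z$; it solves, in $\calVa$,
\[
w_t - \Delta w + (y\cdot\nabla) w + (w\cdot\nabla) z + \nabla(p-q) = f-g.
\]
Proposition \ref{non-es-Q} places all three nonlinear terms in $L^2_{loc}(t_1,t_2;\calVa)$, so a standard Lions--Magenes identification gives $w\in C([t_1,t_2];\calH)$ with $\tfrac{d}{dt}\|w\|_\calH^2 = 2\langle w_t, w\rangle_{\calVa\times\calVs}$. Testing against $w$ and noting that $\mathcal{B}(y, w, w)=0$ by divergence-freeness of $y$, one obtains
\[
\tfrac{1}{2}\tfrac{d}{dt}\|w\|_\calH^2 + \|\nabla w\|_{L^2}^2 = -\mathcal{B}(w, z, w) + \langle f-g, w\rangle_{\calVa\times\calVs}.
\]
The 2D trilinear estimate of Proposition \ref{non-es-Q} gives $|\mathcal{B}(w,z,w)|\leq c_0 \|w\|_{L^2}^{1/2}\|\nabla w\|_{L^2}^{3/2}\|z\|_{L^2}^{1/2}\|\nabla z\|_{L^2}^{1/2}$, and Young's inequality absorbs $\|\nabla w\|^{3/2}$ into $\tfrac{1}{2}\|\nabla w\|_{L^2}^2$, leaving a quadratic term of the form $C\|w\|_{\calH}^2 \|z\|_{L^2}^2\|\nabla z\|_{L^2}^2$. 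Gronwall with integrating factor $\exp\bigl(C\int_{t_1}^t \|z\|_{L^2}^2\|\nabla z\|_{L^2}^2\,ds\bigr)$ then closes the estimate; plugging in the already-proved energy inequality for $z$ bounds this factor by $\exp(CE(t)^2)$. Swapping the roles of $y$ and $z$ produces the $\min$ form. Uniqueness is the special case $y_0=z_0$, $f=g$, and the energy equality results from applying the same multiplier argument to a single solution, which is now legitimate because $y_t\in L^2_{loc}(t_1,t_2;\calVa)$.

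The main obstacle is entirely concentrated in the nonlinear term: the absorption step $\|\nabla w\|^{3/2}\cdot A\leq \tfrac{1}{2}\|\nabla w\|^2 + CA^4$ succeeds only because of the precise $\tfrac{1}{2}$-exponents in Proposition \ref{non-es-Q} (Ladyzhenskaya's inequality in dimension two), which in turn rest on the 2D Sobolev embedding $H^1\hookrightarrow L^4$; the same calculation fails in three dimensions and is precisely why Leray's 3D theory lacks uniqueness. A secondary technicality is the rigorous justification of the chain rule $\tfrac{d}{dt}\|w\|_\calH^2 = 2\langle w_t, w\rangle$, which requires $w_t\in L^2_{loc}(\calVa)$; once more this regularity is supplied by the 2D trilinear bound. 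Everything else --- ODE solvability of the Galerkin system, passage to the limit in the linear terms, and the continuity $y\in C([t_1,t_2];\calH)$ --- is routine once these two points are in place.
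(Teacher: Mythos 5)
The paper does not actually prove this theorem --- it is quoted as Leray's classical result, with the framework and well-posedness deferred to \cite{Leray} and to Chemin's book --- and your Galerkin/Aubin--Lions construction combined with the Ladyzhenskaya-type trilinear bound of Proposition \ref{non-es-Q} and a Gronwall argument on the difference equation is precisely the standard two-dimensional proof those references supply. Your argument is correct, including the decisive absorption step $\|\nabla w\|_{L^2}^{3/2}\cdot A\leq \tfrac12\|\nabla w\|_{L^2}^2+CA^4$ that is special to dimension two and yields uniqueness, the energy equality, and the stability bound with the factor $\exp(CE^2(t))$.
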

\noindent Actually Theorem \ref{Thm-well-stab} holds for $f$ in  $L^2_{loc}(t_1, +\infty; \calV')$, for which  the related inequalities are governed  by the  $L^2(\calVa)$-norm of $f$ and  the constant $C_0$ can be taken as 1.  

\subsection{Time-varying feedback laws, closed-loop systems, and finite time stabilization}\label{sec-def-prop-feed}
In this section we recall the precise definition of  time-varying feedback laws as well as the related closed-loop solutions.
\begin{defi}[Closed-loop systems]
\label{def-sol-closed-loop}
Let $s_1\in \R$ and $s_2\in \R$ be given such that $s_1<s_2$. Let the time-varying feedback law on interval $[s_1, s_2]$  be an application
\begin{equation}\label{feed-s1s2}
\left\{ 
\begin{array}{cccc}
U:  & [s_1,s_2]\times\calH &\to & \calH
\\
&(t; y)&\mapsto & U(t; y).
\end{array}
\right.
\end{equation}
Let $t_1\in [s_1,s_2]$, $t_2\in (t_1, s_2]$, and
$y_0\in \calH$. A solution on $[t_1,t_2]$ to the Cauchy problem associated to 
the closed-loop system \eqref{cons1}--\eqref{cons-f} with \eqref{feed-s1s2} for initial data $y_0$ at time $t_1$ is some $y:  [t_1,t_2]\to \calH$ such that
\begin{gather*}
t\in (t_1,t_2)\mapsto f(t, x):= 1_{\omega}U(t; y(t))\in L^2(t_1, t_2; L^2(\Omega)^2),
\\
y \text{ is a Leray solution  of \eqref{Cauch-NS} with initial data $y_0$ at time $t_1$ and the above force term $f(t, x)$.}
\end{gather*}
\end{defi}
\begin{defi}[Proper feedback laws]
\label{defproper}
Let $s_1\in \R$ and $s_2\in \R$ be given such that $s_1<s_2$. A proper feedback law on $[s_1,s_2]$ is an application $U$ of type  \eqref{feed-s1s2} 
such that, for every $t_1\in [s_1,s_2]$, for every $t_2\in (t_1, s_2]$, and for every
$y_0\in \calH$,  there exists a unique solution on $[t_1,t_2]$ to the Cauchy problem associated to 
the closed-loop system \eqref{cons1}--\eqref{cons-f} with \eqref{feed-s1s2} for initial data $y_0$ at time $t_1$ according to  Definition~\ref{def-sol-closed-loop}.

A proper feedback law  is an application $U$ of type \eqref{cons2}
such that, for every $s_1\in \R$ and for every $s_2\in \R$ satisfying $s_1<s_2$,
the feedback law restricted to $[s_1,s_2]\times \calH$ is a proper feedback law on $[s_1,s_2]$.
\end{defi}

\noindent For a \textbf{proper} feedback law,    one can define the \textbf{flow} $\Phi: \Delta \times \calH \to \calH$ associated to this feedback law, with $\Delta:=\{(t,s);\, t> s\}$:   $\Phi(t,s; y_0)$ is the value at time $t$ of the solution $y$ to the closed-loop system \eqref{cons1}--\eqref{cons2} which is equal to $y_0$ at time $s$.

\begin{defi}[Finite time local stabilization of Navier-Stokes equations]\label{def-fi-sta}
Let $T> 0$.  A $T$-periodic proper feedback law $U$ locally stabilizes the two dimensional Navier-Stokes equation in finite time, if  for some $\varepsilon>0$ the flow $\Phi$ of  the closed-loop system \eqref{cons1}--\eqref{cons2} verifies, 
\begin{itemize}
\item[(i)] ($2T$ stabilization) $\Phi(2T+ t, t; y_0)=0, \;\;\forall t\in \mathbb{R},\; \forall \; ||y_0||_{\calH}\leq \varepsilon,$
\item[(ii)] (Uniform stability)
For every  $\delta> 0$, there exists $\eta> 0$ such that
\begin{equation*}
\big(||y_0||_{\calH}\leq \eta\big) \Rightarrow \left(||\Phi(t, t'; y_0)||_{\calH}\leq \delta, \;\forall   \; t'\in \R,\; \forall  \;t\in ( t', +\infty) \right).
\end{equation*}
\end{itemize}
\end{defi}

\subsection{Well-posedness of closed-loop systems}
Finally we present  well-posedness results concerning closed-loop systems with stationary Lipschitz feedback laws.  Concerning linear feedback laws one has the following well-posedness  result.
\begin{theorem}\label{thm-clo-sta-F-li}
Let $T> 0$.   Let given vector  functions $\{\varphi_i\}_{i=1}^n\in \calH$ and  bounded linear   operators $\{l_i\}_{i=1}^n: \calH\rightarrow \mathbb{R}$.  For any  $y_0\in \calH$  the Cauchy problem
\begin{equation*}
\begin{cases}
y_t- \Delta y+ \left(y\cdot \nabla\right) y+\nabla p   = 1_{\omega} \Big(\sum_{i=1}^n l_i(y) \varphi_i\Big), &  (t, x)\in (0, T)\times \Omega,\\
\textrm{div } y= 0, &  (t, x)\in (0, T)\times \Omega,\\
y(t, x)=0, & (t, x)\in (0, T)\times \partial \Omega,\\
y(0, x)= y_0(x), & x\in \Omega,
\end{cases}
\end{equation*} 
admits a unique solution.  
\end{theorem}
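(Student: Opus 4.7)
The plan is to reformulate the closed-loop Cauchy problem as a fixed point problem and apply Theorem~\ref{Thm-well-stab}. Set $F\colon \calH \to \calH$, $F(y) := \sum_{i=1}^n l_i(y)\varphi_i$, which is linear with operator norm bounded by $M := \sum_i \|l_i\|_{\calH\to\R}\|\varphi_i\|_\calH$. For $T^*\in(0,T]$, define the map $\mathcal{T}\colon C([0,T^*];\calH) \to C([0,T^*];\calH)$ by sending $z$ to the unique Leray solution $\mathcal{T}(z)$ of the inhomogeneous system \eqref{Cauch-NS} on $[0,T^*]$ with initial state $y_0$ at time $0$ and source $1_\omega F(z(t))$. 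Since this source lies in $L^2(0,T^*;L^2(\Omega)^2)$ with $L^2$-norm at most $M\sqrt{T^*}\,\|z\|_{L^\infty(0,T^*;\calH)}$, the map $\mathcal{T}$ is well-defined by Theorem~\ref{Thm-well-stab}, and its fixed points are precisely the solutions to be constructed.

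From the energy inequality of Theorem~\ref{Thm-well-stab} one gets $\|\mathcal{T}(z)(t)\|_\calH^2 \leq \|y_0\|_\calH^2 + C_0 M^2 \int_0^t \|z(s)\|_\calH^2\,ds$, so the closed ball $B_R \subset C([0,T^*];\calH)$ is stable under $\mathcal{T}$ whenever $R \geq 2\|y_0\|_\calH$ and $T^*$ is chosen small enough. For $z_1, z_2 \in B_R$, the Leray stability estimate of Theorem~\ref{Thm-well-stab} applied to $y_i = \mathcal{T}(z_i)$ (same initial state, sources $1_\omega F(z_i)$) gives
\begin{equation*}
\|y_1 - y_2\|_{L^\infty(0,T^*;\calH)}^2 \leq C_0 M^2 T^* \exp(CE^2)\,\|z_1 - z_2\|_{L^\infty(0,T^*;\calH)}^2,
\end{equation*}
where $E$ depends only on $R$ and $M$. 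Shrinking $T^*$ makes the prefactor strictly less than $1$, so $\mathcal{T}$ becomes a strict contraction and Banach's fixed point theorem produces the unique solution on $[0,T^*]$.

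To reach the full horizon $T$, note that any closed-loop solution satisfies $\|y(t)\|_\calH^2 \leq \|y_0\|_\calH^2 + C_0 M^2 \int_0^t \|y(s)\|_\calH^2\,ds$, hence $\|y(t)\|_\calH \leq \|y_0\|_\calH \exp(C_0 M^2 t/2)$ by Gronwall, ruling out any blow-up in $\calH$. One then iterates the local construction by treating the state at $T^*$ as a new initial datum; since the a priori bound grows only exponentially, the successive contraction times stay uniformly positive, and finitely many iterations cover $[0,T]$. Global uniqueness follows directly by applying the Leray stability estimate to two closed-loop solutions $y, \tilde{y}$ with the same initial state and sources $1_\omega F(y), 1_\omega F(\tilde{y})$: one obtains $\|(y-\tilde{y})(t)\|_\calH^2 \leq C \int_0^t \|(y-\tilde{y})(s)\|_\calH^2\,ds$ on $[0,T]$, which forces $y = \tilde{y}$. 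The only mild technical point is that the contraction time $T^*$ depends on $R$ through $\exp(CE^2)$, but since $R$ grows at most exponentially in time, $T^*$ stays bounded below on any finite horizon, so no ingredient beyond Theorem~\ref{Thm-well-stab} is required.
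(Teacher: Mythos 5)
Your proposal is correct and follows essentially the same route the paper sketches for this result: local-in-time existence and uniqueness via Banach's fixed point theorem built on Leray's Theorem~\ref{Thm-well-stab} (the paper contracts in $C([0,\tilde T];\calH)\cap L^2(0,\tilde T;\calVs)$, you contract only in $C([0,T^*];\calH)$, an immaterial difference since the stability estimate controls both norms), followed by the a priori energy bound $\|y(t)\|_\calH^2\leq \|y_0\|_\calH^2+C_0M^2\int_0^t\|y(s)\|_\calH^2\,ds$ and Gronwall to rule out blow-up and extend to $[0,T]$. No gaps.
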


As  will consider  finite time  stabilization problems we also introduce ``cutoff" type feedback laws.   For any $r\in (0, 1/2]$ we find some  smooth \textit{cutoff} function  $ \chi_r\in C^{\infty}(\mathbb{R}^+; [0, 1])$ satisfying 
\begin{equation}\label{def-chi-r}
\chi_r(x)= \begin{cases}
1  \;\;\textrm{ if } x\in [0, r], \\ 0 \;\; \textrm{ if } x\in [2r, +\infty),
\end{cases}
\end{equation}
and further define  the  following related \textit{Lipschitz} operator $\mathcal{K}_r: \calH\rightarrow \calH$,
\begin{equation}\label{def-FF}
\mathcal{K}_r (y):= y \cdot \chi_r\left(||y||_{\calH}\right), \;\forall \; y\in \calH.
\end{equation}
satisfying, for some constant $L_r$ depending on $r$,
\begin{gather*}
||\mathcal{K}_r (y)||_{\calH}\leq \min\{1, ||y||_{\calH}\},\\
||\mathcal{K}_r (y)-\mathcal{K}_r (z)||_{\calH}\leq L_r ||y-z||_{\calH}, \; \forall\; y, z\in \calH.
\end{gather*}

\begin{theorem}\label{thm-clo-sta-F}
Let $T>0$.  Let $r\in (0, 1/2]$. Let given vector  functions $\{\varphi_i\}_{i=1}^n\in \calH$ and bounded  linear   operators $\{l_i\}_{i=1}^n: \calH\rightarrow \mathbb{R}$.  For any  $y_0\in \calH$  the Cauchy problem
\begin{equation*}
\begin{cases}
y_t- \Delta y+ \left(y\cdot \nabla\right) y+\nabla p   = 1_{\omega} \mathcal{K}_r\Big(\sum_{i=1}^n l_i(y) \varphi_i \Big), &  (t, x)\in (0, T)\times \Omega,\\
\textrm{div } y= 0, &  (t, x)\in (0, T)\times \Omega,\\
y(t, x)=0, & (t, x)\in (0, T)\times \partial \Omega,\\
y(0, x)= y_0(x), &x\in \Omega,
\end{cases}
\end{equation*} 
admits a unique solution.  
\end{theorem}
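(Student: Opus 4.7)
My plan is to run a Banach fixed--point argument on $C([0,T];\calH)$ whose linear solver is the Leray well-posedness and stability statement of Theorem~\ref{Thm-well-stab}, exploiting the fact that $\calK_r$ has been designed to be both globally bounded and globally Lipschitz on $\calH$.

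First I would isolate the feedback map
\[
\calG:\calH\to\calH,\qquad \calG(y):=\calK_r\Big(\sum_{i=1}^n l_i(y)\varphi_i\Big),
\]
and record its two uniform properties: $\|\calG(y)\|_{\calH}\leq 1$ for every $y\in\calH$ (since $\|\calK_r(\cdot)\|_{\calH}\leq 1$), and
\[
\|\calG(y)-\calG(z)\|_{\calH}\leq L_r\Big\|\sum_{i=1}^n(l_i(y)-l_i(z))\varphi_i\Big\|_{\calH}\leq K\,\|y-z\|_{\calH},
\]
with $K:=L_r\sum_{i=1}^n\|l_i\|\,\|\varphi_i\|_{\calH}$ depending only on the data of the feedback. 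For any $\wt y\in C([0,T];\calH)$ the frozen force $f_{\wt y}(t,\cdot):=1_{\omega}\calG(\wt y(t))$ then belongs to $L^\infty(0,T;L^2(\Omega)^2)$ with norm bounded by $1$, so Theorem~\ref{Thm-well-stab} associates to $\wt y$ a unique Leray solution $\Psi(\wt y)\in C([0,T];\calH)\cap L^2(0,T;\calVs)$ of \eqref{Cauch-NS} with initial datum $y_0$ and force $f_{\wt y}$.

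Next, the stability clause of Theorem~\ref{Thm-well-stab} applied to $w:=\Psi(\wt y)-\Psi(\wt z)$, which vanishes at $t=0$, gives
\[
\|\Psi(\wt y)(t)-\Psi(\wt z)(t)\|_{\calH}^2\leq C_0\,e^{CE^2(T)}\int_0^t\|\calG(\wt y(s))-\calG(\wt z(s))\|_{\calH}^2\,ds,
\]
where the energy $E(T)\leq \|y_0\|_{\calH}^2+C_0T$ is bounded \emph{uniformly} in $\wt y$ because $\|f_{\wt y}\|_{L^2}\leq 1$. Combined with the Lipschitz bound for $\calG$ this yields
\[
\|\Psi(\wt y)-\Psi(\wt z)\|_{C([0,T^*];\calH)}^2\leq C^*\,T^*\,\|\wt y-\wt z\|_{C([0,T^*];\calH)}^2,
\]
for a constant $C^*=C^*(\|y_0\|_{\calH},T,K)$ that depends neither on $\wt y,\wt z$ nor on the starting point of the sub-interval. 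Choosing $T^*>0$ with $C^*T^*<1$ turns $\Psi$ into a strict contraction on $C([0,T^*];\calH)$, and Banach's theorem produces a unique closed-loop solution on $[0,T^*]$. The same $T^*$ works on $[T^*,2T^*],[2T^*,3T^*],\dots$, so after $\lceil T/T^*\rceil$ steps one covers $[0,T]$, and uniqueness on the full interval follows from the same Grönwall-type estimate applied to any two candidate solutions.

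The main technical delicacy I anticipate is making sure the contraction constant $C^*$ is genuinely independent of the iteration step; this is what allows the patching to terminate in finitely many steps. It requires extracting from Theorem~\ref{Thm-well-stab} an a priori $\calH$-energy bound depending only on $\|y_0\|_{\calH}$, $T$, and the uniform $L^2$-bound on the force (not on the particular $\wt y$ that is being fed into the feedback), which is immediate from the first energy identity of that theorem together with $\|f_{\wt y}\|_{L^2}\leq 1$. Beyond this bookkeeping no further analytical input is needed: Leray's theory absorbs the Navier--Stokes nonlinearity, while the carefully designed cutoff $\calK_r$ absorbs the unboundedness of the linear feedback, collapsing the closed-loop Cauchy problem to a standard Picard iteration.
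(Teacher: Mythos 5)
Your proposal is correct and follows essentially the same route as the paper, which (in the discussion following the statement) sketches exactly this argument: a Banach fixed--point iteration whose solution operator is the Leray solver of Theorem~\ref{Thm-well-stab} applied to the frozen force $1_{\omega}\mathcal{K}_r\big(\sum_i l_i(\wt y)\varphi_i\big)$, with the global boundedness and Lipschitz continuity of $\mathcal{K}_r$ supplying the contraction and the a priori energy estimate supplying global-in-time extension. The only cosmetic difference is that the paper contracts on a ball $\mathcal{X}_{\tilde T}(2M)$ of $C([0,\tilde T];\calH)\cap L^2(0,\tilde T;\calVs)$ while you work on all of $C([0,T^*];\calH)$ via the stability clause, which is legitimate precisely because your feedback map $\calG$ is globally Lipschitz.
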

Both the closed-loop systems with  linear feedback laws and   with Lipschitz nonlinear feedback laws are well-posed correspond to Theorem  \ref{thm-clo-sta-F-li} and Theorem \ref{thm-clo-sta-F}, the detailed proofs of which  we omit. Indeed, local  (in time) existence and uniqueness of solutions   are based on Leray's theorem \ref{Thm-well-stab} and Banach fixed point theorem: let the Lipschitz constant of the feedback law be $L$ and let $||y_0||_{\calH}=M$, for some $\tilde{T}$ small enough we consider the Banach space, 
\begin{gather*}
\mathcal{X}_{\tilde{T}}:= C([0, \tilde{T}]; \calH)\cap L^2(0, \tilde{T}; \calVs), \\
\mathcal{X}_{\tilde{T}}(2M):= \left\{y\in \mathcal{X}_{\tilde{T}}: ||y||_{\mathcal{X}_{\tilde{T}}}^2= ||y||_{C([0, \tilde{T}]; \calH)}^2+ ||\nabla y||_{L^2(0, \tilde{T}; L^2)}^2\leq 4M^2\right\},
\end{gather*}
and find  the fixed point of the following application, 
\begin{equation*}
\left\{ 
\begin{array}{cccc}
\mathcal{S} : & \mathcal{X}_{\tilde{T}}(2M) &\to &  \mathcal{X}_{\tilde{T}}
\\
& y &\mapsto & \mathcal{S}(y),
\end{array}
\right.
\end{equation*}
where $\mathcal{S}(y)$ is the solution of Cauchy problem \eqref{Cauch-NS} with the  initial state $y_0$ and force  (control) term $f=  1_{\omega} \mathcal{K}_r\big(\sum_{i=1}^n l_i(y) \varphi_i \big)$.  Moreover, since $y=\mathcal{S} (y)$ is the solution of the Cauchy problem \eqref{Cauch-NS} with control  $f=  1_{\omega} \mathcal{K}_r\big(\sum_{i=1}^n l_i(y) \varphi_i \big)$, thanks to Theorem \ref{Thm-well-stab},  this solution also belongs to   the space $H^1(0, T; \calV')$. 
In the end, some \textit{a priori} energy estimates lead to global (in time) solutions. \\We also emphasize that the Lipschitz condition is crucial in order to guarantee the uniqueness.  Otherwise one may  need to use other compactness arguments to prove  existence of solutions, see for example \cite{coron-rivas-xiang-kdv-16}.

\subsection{On the choice of constants}
In this section we conclude the values of the constants that will be used later on.

$\bullet$ For any given $\lambda>0$,  we define 
\begin{equation}\label{def-gam-mu}
\gamma_{\lambda}:= C_1e^{C_1\sqrt{\lambda}} \lambda \; \;\textrm{ and }\;   \; \mu_{\lambda}:= \frac{\gamma_{\lambda}^2}{\lambda^2}= C_1^2e^{2C_1\sqrt{\lambda}}>1.
\end{equation}

$\bullet$ By recalling the definition of  $C_1$ in Proposition \ref{Key-Lemma},  we  further select  some $C_2\in [3C_1, +\infty)$ such that for all $\lambda>0$,
\begin{equation}\label{def-C2}
 (1+\lambda C_1) e^{C_1 \sqrt{\lambda}}, \; 8(1+\lambda) C_1^2 e^{2C_1 \sqrt{\lambda}}, \;  8c_0C_1^3e^{3C_1\sqrt{\lambda}} \leq C_2 e^{C_2\sqrt{\lambda}},
\end{equation}
and define 
\begin{equation}
r_{\lambda}:= \left(C_2 e^{C_2\sqrt{\lambda}}\right)^{-1}.
\end{equation}

$\bullet$ Then we  choose some constant  $Q>0$  satisfying
\begin{equation}
C_1 e^{C_1 Q m}, C_2e^{C_2 Q m}\leq e^{\frac{Q^2}{64} m}, \; \forall \; m\geq 1,
\end{equation}
and select
\begin{equation}
 C_3:= \frac{Q^2}{32}.
\end{equation}

\section{Quantitative rapid stabilization}\label{sec-rap} 
Inspired by the recent work \cite{Xiang-heat-2020} on the stabilization of the heat equations, we directly  define the following  stationary feedback  law 
\begin{equation}\label{def-F-lambda}
\mathcal{F}_{\lambda} y:= -\gamma_{\lambda} \mathbb{P}_{N(\lambda)} y, \; \forall y\in \calH,
\end{equation} 
and consider the following closed-loop system,
\begin{equation}\label{clo-1} 
\begin{cases}
y_t= \Delta y- (y\cdot \nabla)y -\nabla p- \gamma_{\lambda} 1_{\omega} \mathbb{P}_{N} y &\textrm{ in } \Omega, \\
\textrm{div } y=0  &\textrm{ in } \Omega,\\
y=0 & \textrm{ on } \partial \Omega,
\end{cases}
\end{equation} 
where, and from now on, we simply denote $N(\lambda)$ by $N$.  Furthermore, the low frequency system satisfies
\begin{equation}
\frac{d}{dt} \Big(\mathbb{P}_N y\Big)=    \mathbb{P}_N \big(\Delta y\big)-  \mathbb{P}_N\Big((y\cdot \nabla)y\Big) - \gamma_{\lambda} \mathbb{P}_N \Big(1_{\omega} \mathbb{P}_{N} y\Big).
\end{equation} 
Since $y$ lives in $\calH$, we decompose
\begin{gather*}
y(t, x)=\mathbb{P}y(t, x)= \sum_{i=1}^{\infty} y_i(t) e_i,\\
\mathbb{P}\left(1_{\omega} e_i\right)=\sum_{j=1}^{\infty} \big(1_{\omega} e_i, e_j\big)_{L^2(\Omega)^2} e_j=  \sum_{j=1}^{\infty} \big(e_i, e_j\big)_{L^2(\omega)^2} e_j,
\end{gather*}
and 
\begin{gather*}
 \mathbb{P}_N \big((y\cdot \nabla)y\big)= \sum_{i=1}^N \big\langle (y\cdot \nabla)y, e_i\big\rangle_{\calV'\times \calV} e_i,\\
 \mathbb{P}_N \Big(1_{\omega} \mathbb{P}_{N} y\Big)= \mathbb{P}_N \Big(1_{\omega}  \sum_{i=1}^{N} y_i(t) e_i\Big)= \sum_{i=1}^N \sum_{j=1}^{N} y_i(t) \big(e_i, e_j\big)_{L^2(\omega)^2} e_j.
\end{gather*}
By defining 
\begin{equation}
X_N(t):= \begin{pmatrix}
y_1(t)\\ y_2(t)\\...\\y_N(t)
\end{pmatrix}, 
Y_N(t):= \begin{pmatrix}
-\big\langle (y\cdot \nabla)y, e_1\big\rangle_{\calV'\times \calV}(t)\\ -\big\langle (y\cdot \nabla)y, e_2\big\rangle_{\calV'\times \calV}(t)\\...\\-\big\langle (y\cdot \nabla)y, e_N\big\rangle_{\calV'\times \calV}(t)
\end{pmatrix}, 
A_N:= \begin{pmatrix}
-\tau_1 & & &\\ & -\tau_2 & & \\&&...&\\& & &-\tau_N
\end{pmatrix},
\end{equation}
we know that the finite dimensional system  $X_N(t)$ satisfies 
\begin{equation}
\dot{X}_N(t)= A_N X_N(t)-\gamma_{\lambda} J_N X_N(t)+ Y_N(t).
\end{equation}

Let us consider  the following  Lyapunov functional on $\calH$,
\begin{equation}\label{def-Vy}
V(y):= \mu_{\lambda}  \left(\mathbb{P}_N y, \mathbb{P}_N y\right)_{L^2(\Omega)^2}+ \left(\mathbb{P}_N^{\perp} y, \mathbb{P}_N^{\perp} y\right)_{L^2(\Omega)^2}=  \mu_{\lambda} ||X_N||_2^2+ \left(\mathbb{P}_N^{\perp} y, \mathbb{P}_N^{\perp} y\right)_{L^2(\Omega)^2}, 
\end{equation}
for every $y\in \calH$, where  $||X_N||_2^2$ denotes $\sum_{i=1}^N y_i^2$. 

Concerning the variation of the  Lyapunov functional, at least for $y(t)$ regular enough, for example $C^1([0, T]; \calVa)\cap C^0([0, T]; \calVs)$, one has
\begin{align*} 
\frac{d}{dt}V\left(y(t)\right)&= \mu_{\lambda} \frac{d}{dt} ||X_N||_2^2+ \frac{d}{dt} \left(\mathbb{P}_N^{\perp} y, \mathbb{P}_N^{\perp} y\right)_{L^2(\Omega)^2}= 2\mu_{\lambda}\dot{X}_N^T X_N+  2\left\langle\mathbb{P}_N^{\perp} y,  \frac{d}{dt}y\right\rangle_{\calVs\times \calVa},
\end{align*}
and  the value of $-Y_N^T X_N$ is given by 
\begin{equation*}
\left\langle\mathbb{P}_N \Big((y\cdot \nabla)y\Big),  \mathbb{P}_N y\right\rangle_{\calVa\times \calVs}= \left\langle\mathbb{P} \Big((y\cdot \nabla)y\Big),  \mathbb{P}_N y\right\rangle_{\calVa\times \calVs}=  \big\langle(y\cdot \nabla)y,  \mathbb{P}_N y\big\rangle_{\calV'\times \calV}= \mathcal{B}\left(y, y, \mathbb{P}_N y\right).
\end{equation*} 
Hence, on the one hand,
thanks to Proposition \ref{Key-Lemma},  Proposition \ref{non-es-Q},  and  the choice of $\gamma_{\lambda}$ and $\mu_{\lambda}$, we have
\begin{align*}
\mu_{\lambda} \frac{d}{dt} ||X_N||_2^2&= \mu_{\lambda} X_N^T \left( A_N^T+ A_N-2\gamma_{\lambda} J_N\right)X_N+ \mu_{\lambda} \left( Y_N^T X_N+ X_N^T Y_N\right)  \\
&= 2\mu_{\lambda} X_N^T \left(  A_N-\gamma_{\lambda} J_N\right)X_N+ 2\mu_{\lambda} X_N^T Y_N, \\
&\leq -2\mu_{\lambda} \gamma_{\lambda} \left(C_1e^{C_1\sqrt{\lambda}}\right)^{-1}||X_N||_2^2- 2\mu_{\lambda} ||\nabla \mathbb{P}_N y||_{L^2(\Omega)}^2+  2\mu_{\lambda} |\mathcal{B}\left(y, y, \mathbb{P}_N y\right)|, \\
&\leq -2\mu_{\lambda} \lambda||X_N||_2^2- 2\mu_{\lambda} ||\nabla \mathbb{P}_N y||_{L^2(\Omega)}^2+  2\mu_{\lambda} c_0||y||_{L^2(\Omega)}||\nabla y||_{L^2(\Omega)}||\nabla \mathbb{P}_N y||_{L^2(\Omega)}, \\
&\leq -2\mu_{\lambda} \lambda||X_N||_2^2- 2\mu_{\lambda} ||\nabla \mathbb{P}_N y||_{L^2(\Omega)}^2+  2\mu_{\lambda} c_0||y||_{L^2(\Omega)}||\nabla y||_{L^2(\Omega)}^2.
\end{align*}
On the other hand, 
\begin{align*}
&\;\;\;\;\;\frac{d}{dt} \left(\mathbb{P}_N^{\perp} y, \mathbb{P}_N^{\perp}y\right)_{L^2(\Omega)^2}, \\
&= 2\left\langle\mathbb{P}_N^{\perp} y, \Delta y-(y\cdot \nabla)y - \gamma_{\lambda} 1_{\omega} \mathbb{P}_{N} y -\nabla p\right\rangle_{\calVs\times \calVa}, \\
&= -2\left(\mathbb{P}_N^{\perp} y,   y\right)_{\calVs}- 2\gamma_{\lambda} \left(\mathbb{P}_N^{\perp} y, 1_{\omega} \mathbb{P}_{N} y\right)_{L^2(\Omega)^2}- 2  \left\langle(y\cdot \nabla)y,  \mathbb{P}_N^{\perp} y \right\rangle_{\calVa\times \calVs}, \\
&=-2\sum_{i=N+1}^{\infty}\tau_i y_i^2- 2\gamma_{\lambda} \left(\mathbb{P}_N^{\perp} y, 1_{\omega} \mathbb{P}_{N} y\right)_{L^2(\Omega)^2}- 2 \mathcal{B}\left(y, y, \mathbb{P}_N^{\perp} y\right), \\
&\leq -2\sum_{i=N+1}^{\infty}\tau_i y_i^2+2\gamma_{\lambda} ||\mathbb{P}_N^{\perp} y||_{L^2(\Omega)} ||1_{\omega} \mathbb{P}_{N} y||_{L^2(\Omega)}+ 2 \mathcal{B}\left(y, y, \mathbb{P}_N y\right), \\
&\leq -\frac{3}{2}\lambda ||\mathbb{P}_N^{\perp}y||_{L^2(\Omega)}^2- \frac{1}{2} ||\nabla \mathbb{P}_N^{\perp} y||_{L^2(\Omega)}^2+ \lambda ||\mathbb{P}_N^{\perp} y||_{L^2(\Omega)}^2+ \frac{\gamma_{\lambda}^2}{\lambda} ||X_N||_2^2+   2  c_0||y||_{L^2(\Omega)}||\nabla y||_{L^2(\Omega)}^2,   \\ 
&\leq -\frac{1}{2}\lambda ||\mathbb{P}_N^{\perp}y||_{L^2(\Omega)}^2- \frac{1}{2} ||\nabla \mathbb{P}_N^{\perp} y||_{L^2(\Omega)}^2+ \mu_{\lambda}\lambda||X_N||_2^2+   2 c_0||y||_{L^2(\Omega)}||\nabla y||_{L^2(\Omega)}^2.
\end{align*}
Combining the preceding three inequalities, we further get 
\begin{align*}
\frac{d}{dt}V(y(t))&\leq -2\mu_{\lambda} \lambda||X_N||_2^2- 2\mu_{\lambda} ||\nabla \mathbb{P}_N y||_{L^2(\Omega)}^2+  2\mu_{\lambda} c_0||y||_{L^2(\Omega)}||\nabla y||_{L^2(\Omega)}^2    \\
&\;\;\;\;\;\;\;\;\;\;  -\frac{1}{2}\lambda ||\mathbb{P}_N^{\perp}y||_{L^2(\Omega)}^2- \frac{1}{2} ||\nabla \mathbb{P}_N^{\perp} y||_{L^2(\Omega)}^2+ \mu_{\lambda}\lambda||X_N||_2^2+   2 c_0||y||_{L^2(\Omega)}||\nabla y||_{L^2(\Omega)}^2, \\
&\leq -\mu_{\lambda} \lambda||X_N||_2^2- \frac{1}{2}\lambda ||\mathbb{P}_N^{\perp}y||_{L^2(\Omega)}^2- \frac{1}{2} ||\nabla  y||_{L^2(\Omega)}^2+ 4\mu_{\lambda} c_0||y||_{L^2(\Omega)}||\nabla y||_{L^2(\Omega)}^2, \\
&\leq \left(-\frac{\lambda}{2}\right)V(y(t))-\frac{1}{2} ||\nabla  y||_{L^2(\Omega)}^2+ 4\mu_{\lambda} c_0 ||y||_{L^2(\Omega)}||\nabla y||_{L^2(\Omega)}^2, \\
&\leq  \left(-\frac{\lambda}{2}\right)V(y(t))- ||\nabla y||_{L^2(\Omega)}^2\left(\frac{1}{2}- 4\mu_{\lambda} c_0 V^{\frac{1}{2}}(y(t))\right).
\end{align*}
Eventually,  according to Theorem \ref{Thm-well-stab} and Theorem \ref{thm-clo-sta-F-li} the solution $y$ lives indeed in the space  $H^1(0, T; \calV')\cap C^0([0, T]; \calH)\cap L^2(0, T; \calVs)$, which is included in $H^1(0, T; \calVa)\cap L^2(0, T; \calVs)$. Hence the preceding inequality holds in the distribution sense in $L^1(0, T)$.  Inspired by the same formula,  at first by ignoring  the first term in the right hand side  we know that the value of $\max\{V(y(t)), (8\mu_{\lambda}c_0)^{-2}\}$ decreases. Therefore, if $V(y(0))\leq (8\mu_{\lambda}c_0)^{-2}$ then  the value of  $V(y(t))$ is always smaller than $ (8\mu_{\lambda}c_0)^{-2}$. As a consequence in the preceding inequality one can  next ignore the second term involving $\nabla y$, which results in that the  Lyapunov functional $V(y(t))$  decay exponentially with decay rate $\lambda/2$. More precisely, by the choice of $r_{\lambda}$  for any initial data $y_0\in \calH$ satisfying  $||y_0||_{L^2(\Omega)}\leq r_{\lambda}$ we have
\[ V(y_0)\leq \mu_{\lambda} ||y_0||_{L^2(\Omega)}^2\leq \mu_{\lambda} r_{\lambda}^2\leq  (8\mu_{\lambda}c_0)^{-2},\]
thus
\[ V(y(t))\leq e^{-\frac{\lambda}{2} t} V(y(0)), \; \forall \; t\geq 0.\]
Consequently
\begin{equation}
||y(t)||_{L^2(\Omega)}^2\leq V(y(t))\leq e^{-\frac{\lambda}{2} t} V(y(0))\leq e^{-\frac{\lambda}{2} t}\mu_{\lambda} ||y(0)||_{L^2(\Omega)}^2\leq C_1^2 e^{2C_1\sqrt{\lambda}} e^{-\frac{\lambda}{2} t}||y(0)||_{L^2(\Omega)}^2.  \notag
\end{equation}
Therefore,  for any $ ||y_0||_{L^2(\Omega)}\leq r_{\lambda}$ the solution decays exponentially,
\begin{align*}
||y(t)||_{L^2(\Omega)}&\leq C_1 e^{C_1\sqrt{\lambda}} e^{-\frac{\lambda}{4} t}||y(0)||_{L^2(\Omega)},   \forall \; t\in [0, +\infty), \\
|| \mathcal{F}_{\lambda} y(t)||_{L^2(\Omega)}&\leq \gamma_{\lambda} ||y(t)||_{L^2(\Omega)}\leq \lambda C_1^2 e^{2C_1 \sqrt{\lambda}} e^{-\frac{\lambda}{4} t} ||y(0)||_{L^2(\Omega)},  \forall \; t\in [0, +\infty). 
\end{align*}

Therefore, we have proved the following theorem that quantifies Theorem \ref{int-thm-rap-sta-li}.
\begin{theorem}[Local stabilization with linear feedback laws]\label{thm-rap-sta-li}
For any $\lambda> 0$,   for any $||y_0||_{\calH}\leq r_{\lambda}$, and for any $s\in \mathbb{R}$ the Cauchy problem 
\begin{gather}\label{sys-li-f-lam}
\begin{cases}
y_t- \Delta y+ (y\cdot \nabla)y+ \nabla p=-\gamma_{\lambda} 1_{\omega} \mathcal{F}_{\lambda}y, & (t, x)\in [s, +\infty)\times \Omega,\\
\textrm{div } y=0, &  (t, x)\in [s, +\infty)\times \Omega,\\
y(t, x)=0, &    (t, x)\in [s, +\infty)\times \partial \Omega, \\
y(s, x)= y_0(x),& x\in \Omega,
\end{cases}
\end{gather} 
 has a unique solution in $C^0([s, +\infty); \calH)\cap L^2_{loc}(s, +\infty; \calVs)$. Moreover, this unique solution verifies 
\begin{align}
||y(t)||_{L^2(\Omega)}&\leq C_1 e^{C_1\sqrt{\lambda}} e^{-\frac{\lambda}{4} (t-s)}||y_0||_{L^2(\Omega)}, \; \forall  \;t\in [s, +\infty), \label{ex-es-1} \\
||\mathcal{F}_{\lambda}y(t)||_{L^2(\Omega)}&\leq  C_2 e^{C_2\sqrt{\lambda}} e^{-\frac{\lambda}{4} (t-s)}||y_0||_{L^2(\Omega)}, \; \forall \; t\in [s, +\infty).    \label{ex-es-2}
\end{align}
\end{theorem}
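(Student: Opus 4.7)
The plan is to first establish global-in-time well-posedness of \eqref{sys-li-f-lam} and then to derive the decay estimates via the Lyapunov functional $V$ defined in \eqref{def-Vy}. Since $\mathcal{F}_\lambda y = -\gamma_\lambda \sum_{i=1}^{N(\lambda)} (y,e_i)_{L^2} e_i$ is exactly of the form covered by Theorem \ref{thm-clo-sta-F-li}, existence and uniqueness on every finite window $[s, s+T]$ is immediate; the a priori bounds derived below will then rule out blow-up and give a global solution in the claimed regularity class.

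For the decay, the key computation is to differentiate $V(y(t)) = \mu_\lambda \|X_N\|_2^2 + \|\mathbb{P}_N^\perp y\|_{L^2}^2$ along trajectories. On the low-frequency block one uses that $X_N$ satisfies $\dot X_N = (A_N - \gamma_\lambda J_N) X_N + Y_N$; Proposition \ref{Key-Lemma} provides the crucial bound $X_N^T \gamma_\lambda J_N X_N \geq \lambda \|X_N\|_2^2$, which is what the choice $\gamma_\lambda = C_1 e^{C_1\sqrt{\lambda}}\lambda$ is engineered for. On the high-frequency block, Poincaré gives $\|\nabla \mathbb{P}_N^\perp y\|^2 \geq \lambda \|\mathbb{P}_N^\perp y\|^2$, with a spare half of the dissipation available to absorb the cross term $2\gamma_\lambda(\mathbb{P}_N^\perp y, 1_\omega \mathbb{P}_N y)$ via Young's inequality, yielding a residue $\gamma_\lambda^2\lambda^{-1}\|X_N\|_2^2 = \mu_\lambda \lambda \|X_N\|_2^2$, which is in turn swallowed by the low-frequency dissipation (this is exactly why $\mu_\lambda = \gamma_\lambda^2/\lambda^2$ is the correct weight in the definition of $V$).

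The nonlinearity is treated uniformly by the estimate in Proposition \ref{non-es-Q}: since $y \in \calVs$ one has $\mathcal{B}(y, y, w) \leq c_0 \|y\|_{L^2}\|\nabla y\|_{L^2}\|\nabla w\|_{L^2}$ with $w = \mathbb{P}_N y$ or $\mathbb{P}_N^\perp y$, and the dangerous $\mu_\lambda$ factor from the low-frequency piece is compensated by keeping the full gradient dissipation $-\tfrac12\|\nabla y\|^2$ on the right. Putting everything together yields the inequality
\[ \frac{d}{dt} V(y(t)) \leq -\tfrac{\lambda}{2} V(y(t)) - \|\nabla y\|_{L^2}^2 \Bigl(\tfrac12 - 4\mu_\lambda c_0 V^{1/2}(y(t))\Bigr). \]
The smallness threshold $r_\lambda = (C_2 e^{C_2\sqrt\lambda})^{-1}$ is chosen so that $V(y_0) \leq \mu_\lambda r_\lambda^2 \leq (8\mu_\lambda c_0)^{-2}$; a continuity/bootstrap argument then shows the parenthesis stays nonnegative for all $t \geq s$, so $V$ decays at rate $\lambda/2$. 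Converting back via $\|y\|_{L^2}^2 \leq V(y) \leq \mu_\lambda \|y\|_{L^2}^2$ and using $\|\mathcal{F}_\lambda y\|_{L^2} \leq \gamma_\lambda \|y\|_{L^2}$ together with the inequalities \eqref{def-C2} for $C_2$ yields \eqref{ex-es-1}--\eqref{ex-es-2}.

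The main obstacle is the bootstrap: without the smallness assumption the nonlinear defect $4\mu_\lambda c_0 V^{1/2}$ could overwhelm the gradient dissipation, so one must verify that the trapping region $\{V \leq (8\mu_\lambda c_0)^{-2}\}$ is positively invariant. This is where the precise calibration of $r_\lambda$ relative to $\mu_\lambda$ and $c_0$ is indispensable; all the rest is a careful but routine accounting of spectral and multiplicative estimates.
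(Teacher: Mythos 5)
Your proposal is correct and follows essentially the same route as the paper: the identical Lyapunov functional $V=\mu_\lambda\|X_N\|_2^2+\|\mathbb{P}_N^\perp y\|_{L^2}^2$, the same use of Proposition \ref{Key-Lemma} to get $\gamma_\lambda J_N\geq \lambda\,\mathrm{Id}$ on the low modes, the same Young-inequality absorption of the cross term producing the residue $\mu_\lambda\lambda\|X_N\|_2^2$, the same differential inequality $\tfrac{d}{dt}V\leq -\tfrac{\lambda}{2}V-\|\nabla y\|^2\bigl(\tfrac12-4\mu_\lambda c_0 V^{1/2}\bigr)$, and the same invariance-of-$\{V\leq (8\mu_\lambda c_0)^{-2}\}$ bootstrap calibrated by the choice of $r_\lambda$ in \eqref{def-C2}. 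Nothing essential is missing.
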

Actually similar result also holds for nonlinear feedback laws  $\mathcal{K}_{r_{\lambda}}\left( \mathcal{F}_{\lambda} y\right)$ provided by equations \eqref{def-chi-r}--\eqref{def-FF} and \eqref{def-F-lambda}.  From the preceding theorem we observe  that  for initial state  $||y_0||_{\calH}\leq r_{\lambda}^2$, the unique  solution $y(t)$  of the Cauchy problem \eqref{sys-li-f-lam} satisfies
\begin{equation}\label{li-con-r}
||\mathcal{F}_{\lambda}y(t)||_{L^2(\Omega)}\leq  C_2 e^{C_2\sqrt{\lambda}} e^{-\frac{\lambda}{4} (t-s)}||y_0||_{L^2(\Omega)}\leq  C_2 e^{C_2\sqrt{\lambda}}  r_{\lambda}^2= r_{\lambda}, \; \forall \; t\in [s, +\infty).   
\end{equation}

Now, we replace the linear  feedback law $\mathcal{F}_{\lambda}$ by  $\mathcal{K}_{r_{\lambda}}\left( \mathcal{F}_{\lambda} y\right)$ (see equation \eqref{def-FF}), which satisfies 
 \begin{equation}\label{cond-K-F}
 ||\mathcal{K}_{r_{\lambda}}\left( \mathcal{F}_{\lambda} y\right)||_{L^2(\Omega)}\leq \min \{1, \sqrt{2||y||_{L^2(\Omega)}}\}.
 \end{equation}
Indeed,  if $||\mathcal{F}_{\lambda} y||_{L^2(\Omega)}\leq 2 r_{\lambda}$, then since the operator norm $||\mathcal{F}_{\lambda}||\leq \gamma_{\lambda}\leq r_{\lambda}^{-1}$, we have,
 \begin{equation*}
 ||\mathcal{K}_{r_{\lambda}}\left( \mathcal{F}_{\lambda} y\right)||_{L^2(\Omega)}\leq || \mathcal{F}_{\lambda_n} y||_{L^2(\Omega)}\leq \sqrt{2 r_{\lambda} || \mathcal{F}_{\lambda} y||_{L^2(\Omega)}}\leq \sqrt{2 r_{\lambda} || \mathcal{F}_{\lambda}|| ||y||_{L^2(\Omega)}} \leq \sqrt{2||y||_{L^2(\Omega)}}.
 \end{equation*}
  If $||\mathcal{F}_{\lambda} y||_{L^2(\Omega)}> 2 r_{\lambda}$, then by the definition of $\mathcal{K}_{r_{\lambda}}$ we know that  $\mathcal{K}_{r_{\lambda}}\left( \mathcal{F}_{\lambda} y\right)= 0$, which completes the proof of the condition \eqref{cond-K-F}.
  
Finally, we show that for  $||y_0||_{\calH}\leq r_{\lambda}^2$  the solution of the closed-loop system with feedback law $\mathcal{K}_{r_{\lambda}}\left( \mathcal{F}_{\lambda} y\right)$  also decays exponentially.  Indeed, it suffices to prove that the solution $y$ verifies
\begin{equation*}
\mathcal{K}_{r_{\lambda}}\left( \mathcal{F}_{\lambda} y(t)\right)= \mathcal{F}_{\lambda} y(t), \; \forall \; t\in [s, +\infty),
\end{equation*}
which, by recalling the definition of $\mathcal{K}_{r_{\lambda}}$ in \eqref{def-chi-r}--\eqref{def-FF},   is true  according to \eqref{li-con-r},
\begin{equation*}
||\mathcal{F}_{\lambda}y(t)||_{L^2(\Omega)}\leq  C_2 e^{C_2\sqrt{\lambda}}||y_0||_{L^2(\Omega)}\leq C_2 e^{C_2\sqrt{\lambda}}r_{\lambda}^2= r_{\lambda}, \; \forall \; t\in [s, +\infty). 
\end{equation*}
 \begin{theorem}[Local stabilization with nonlinear Lipschitz feedback laws]\label{thm-rap-sta-li-K}
For any $\lambda> 0$,   for any $||y_0||_{\calH}\leq r_{\lambda}^2$, and for any $s\in \mathbb{R}$ the Cauchy problem 
\begin{gather}
\begin{cases}
y_t- \Delta y+ (y\cdot \nabla)y+ \nabla p=-\gamma_{\lambda} 1_{\omega}\mathcal{K}_{r_{\lambda}}\left( \mathcal{F}_{\lambda}y\right), &  (t, x)\in [s, +\infty)\times \Omega,\\
\textrm{div } y=0, &  (t, x)\in [s, +\infty)\times \Omega,\\
y(t, x)=0, &    (t, x)\in [s, +\infty)\times \partial \Omega, \\
y(s, x)= y_0(x), & x \in \Omega,
\end{cases}
\end{gather} 
 has a unique solution in $C^0([s, +\infty); \calH)\cap L^2_{loc}(s, +\infty; \calVs)$. Moreover, this unique solution verifies 
\begin{align*}
||y(t)||_{L^2(\Omega)}&\leq C_1 e^{C_1\sqrt{\lambda}} e^{-\frac{\lambda}{4} (t-s)}||y_0||_{L^2(\Omega)}, \; \forall  \;t\in [s, +\infty), \\
||\mathcal{F}_{\lambda}y(t)||_{L^2(\Omega)}&\leq  C_2 e^{C_2\sqrt{\lambda}} e^{-\frac{\lambda}{4} (t-s)}||y_0||_{L^2(\Omega)}, \; \forall \; t\in [s, +\infty).   
\end{align*}
\end{theorem}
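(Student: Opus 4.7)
The plan is to reduce Theorem \ref{thm-rap-sta-li-K} to the already-proved linear result, Theorem \ref{thm-rap-sta-li}, by showing that on the relevant trajectory the cutoff $\mathcal{K}_{r_\lambda}$ never actually engages. Since $\mathcal{K}_{r_\lambda}\circ \mathcal{F}_\lambda$ has exactly the structure of the Lipschitz feedback covered by Theorem \ref{thm-clo-sta-F} (a composition of the cutoff $\mathcal{K}_{r_\lambda}$ with the finite-rank linear map $y\mapsto -\gamma_\lambda \mathbb{P}_{N(\lambda)} y$, which can be written as $\sum_{i=1}^{N(\lambda)} l_i(y)\varphi_i$ with $l_i(y) = -\gamma_\lambda(y,e_i)_{L^2}$ and $\varphi_i = e_i$), the nonlinear closed-loop Cauchy problem admits a unique solution in $C^0([s,+\infty);\calH)\cap L^2_{loc}(s,+\infty;\calVs)$. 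So well-posedness is immediate and the only thing to do is identify this unique solution with the linear one.

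The key observation is the one already recorded in \eqref{li-con-r}: by the constant choices \eqref{def-gam-mu}--\eqref{def-C2}, we have $C_2 e^{C_2\sqrt\lambda}\,r_\lambda^2 = r_\lambda$. Therefore, if $y^{\text{lin}}$ denotes the unique solution to the \emph{linear} closed-loop Cauchy problem \eqref{sys-li-f-lam} provided by Theorem \ref{thm-rap-sta-li} (valid because $\|y_0\|_\calH \le r_\lambda^2 \le r_\lambda$), then the estimate \eqref{ex-es-2} gives
\begin{equation*}
\|\mathcal{F}_\lambda y^{\text{lin}}(t)\|_{L^2(\Omega)} \leq C_2 e^{C_2\sqrt\lambda} e^{-\frac{\lambda}{4}(t-s)} \|y_0\|_{L^2(\Omega)} \leq C_2 e^{C_2\sqrt\lambda}\, r_\lambda^2 = r_\lambda, \quad \forall\, t\in [s,+\infty).
\end{equation*}
By the definition \eqref{def-chi-r}--\eqref{def-FF} of $\chi_{r_\lambda}$ and $\mathcal{K}_{r_\lambda}$, this pointwise-in-time bound forces $\mathcal{K}_{r_\lambda}(\mathcal{F}_\lambda y^{\text{lin}}(t)) = \mathcal{F}_\lambda y^{\text{lin}}(t)$ for every $t\geq s$. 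Hence $y^{\text{lin}}$ solves the \emph{nonlinear} closed-loop Cauchy problem as well.

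By the uniqueness part of the well-posedness result from step one, $y^{\text{lin}}$ coincides with the unique solution $y$ of the nonlinear closed-loop system, so the two decay estimates follow verbatim from \eqref{ex-es-1}--\eqref{ex-es-2}. I do not expect a real obstacle here; the argument is a clean bootstrap that uses the shrinkage from $r_\lambda$ to $r_\lambda^2$ precisely to absorb the factor $C_2 e^{C_2\sqrt\lambda}$ appearing in the control estimate, thereby guaranteeing that the trajectory stays inside the region where the cutoff $\chi_{r_\lambda}$ equals $1$. The only mild subtlety is making sure the uniqueness statement is quoted from the Lipschitz-feedback theory (Theorem \ref{thm-clo-sta-F}) rather than from the linear theory, since it is under Theorem \ref{thm-clo-sta-F} that the equality of the two a priori distinct solutions is upgraded from ``exists'' to ``is the unique one''.
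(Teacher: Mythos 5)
Your proposal is correct and follows essentially the same route as the paper: the paper likewise observes via \eqref{li-con-r} that $\|\mathcal{F}_\lambda y(t)\|_{L^2}\le C_2e^{C_2\sqrt{\lambda}}r_\lambda^2=r_\lambda$ for $\|y_0\|_{\calH}\le r_\lambda^2$, so the cutoff satisfies $\mathcal{K}_{r_\lambda}(\mathcal{F}_\lambda y(t))=\mathcal{F}_\lambda y(t)$ for all $t\ge s$ and the linear solution of Theorem \ref{thm-rap-sta-li} is the (unique, by Theorem \ref{thm-clo-sta-F}) solution of the nonlinear closed-loop system. Your explicit remark that uniqueness must be quoted from the Lipschitz-feedback well-posedness result is a point the paper leaves implicit, but the argument is the same.
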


\section{Quantitative null controllability with cost estimates}\label{sec-null}
In this section we  construct feedback laws (controls) that yields the solution decays to zero in finite time. 
\begin{theorem}\label{thm-null-col-opt}
There exists $C_3>0$ such that, for any $T\in (0, 1)$,  for any $y_0\in \calH$ satisfying $||y_0||_{\calH}\leq e^{-\frac{C_3}{T}} $  we construct  an explicit control $f(t, x)$ for the controlled system \eqref{cons1}  such that the unique solution with initial data $y(0, x)=y_0(x)$ verifies $y(T, x)=0$.  Moreover, 
\begin{equation}
|| f(t, x)||_{L^{\infty}(0, T; \calH)}\leq   e^{\frac{C_3}{T}} ||y_0||_{\calH}.  \notag
\end{equation} 
\end{theorem}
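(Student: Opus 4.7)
\textbf{Proof plan for Theorem \ref{thm-null-col-opt}.}
The strategy is to concatenate rapidly-stabilizing feedback laws on a nested sequence of intervals exhausting $[0,T)$, following the piecewise-in-time scheme of \cite{2017-Coron-Nguyen-ARMA,Xiang-heat-2020}. Partition $[0,T)$ as $T_n := T(1-2^{-n})$, so $\Delta_n := T_{n+1}-T_n = T/2^{n+1}$. On the $n$-th subinterval, pick the decay rate
\begin{equation*}
\lambda_n := Q^2 \cdot 4^{n+1}/T^2, \qquad \sqrt{\lambda_n} = Q m_n, \qquad m_n := 2^{n+1}/T,
\end{equation*}
and run the closed-loop system of Theorem~\ref{thm-rap-sta-li} with the linear feedback $\mathcal{F}_{\lambda_n}$ started at time $T_n$ from the state $y(T_n)$. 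Observe the key identity $\lambda_n \Delta_n/4 = Q^2 m_n/4$, while $m_n\geq 1$ (since $T\in(0,1)$), so the defining property of $Q$ gives both $C_1 e^{C_1\sqrt{\lambda_n}}\leq e^{Q^2 m_n/64}$ and $C_2 e^{C_2\sqrt{\lambda_n}}\leq e^{Q^2 m_n/64}$.

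Next I would prove by induction the smallness condition $\|y(T_n)\|_\calH \leq r_{\lambda_n}$, which allows Theorem~\ref{thm-rap-sta-li} to be applied on $[T_n,T_{n+1}]$. The base case $n=0$: $r_{\lambda_0} = (C_2 e^{C_2\sqrt{\lambda_0}})^{-1} \geq e^{-Q^2 m_0/64} = e^{-Q^2/(32T)} = e^{-C_3/T}$, which matches the hypothesis on $\|y_0\|_\calH$. For the induction, plug in \eqref{ex-es-1} and check that the requirement
\begin{equation*}
C_1 e^{C_1\sqrt{\lambda_n}} e^{-\lambda_n\Delta_n/4}\cdot r_{\lambda_n}\leq r_{\lambda_{n+1}}
\end{equation*}
follows directly from the bounds on $Q$ after taking logarithms; iterating, one obtains the sharper estimate
\begin{equation*}
\|y(T_n)\|_\calH \leq \|y_0\|_\calH \prod_{k=0}^{n-1} e^{-15 Q^2 m_k/64} = \|y_0\|_\calH \cdot \exp\Bigl(-\tfrac{15 Q^2}{64 T}(2^{n+1}-2)\Bigr),
\end{equation*}
which in particular tends to $0$ superexponentially as $n\to\infty$.

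Third, I would estimate the control. On $[T_n,T_{n+1}]$, \eqref{ex-es-2} gives
\begin{equation*}
\|\mathcal{F}_{\lambda_n} y(t)\|_{\calH} \leq C_2 e^{C_2\sqrt{\lambda_n}}\|y(T_n)\|_\calH\leq e^{Q^2 m_n/64}\|y(T_n)\|_\calH.
\end{equation*}
Combining with the iterated decay bound above yields
\begin{equation*}
\|f\|_{L^\infty(T_n,T_{n+1};\calH)} \leq \|y_0\|_\calH \exp\Bigl(\tfrac{Q^2}{64 T}\bigl(-14\cdot 2^{n+1}+30\bigr)\Bigr) \leq \|y_0\|_\calH e^{Q^2/(32T)} = e^{C_3/T}\|y_0\|_\calH,
\end{equation*}
the exponent being maximal at $n=0$ (where it equals exactly $C_3/T$) and strictly negative for $n\geq 1$. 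Taking the supremum over $n$ gives the required global bound $\|f\|_{L^\infty(0,T;\calH)}\leq e^{C_3/T}\|y_0\|_\calH$.

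Finally, one must show that the concatenated state $y$ extends continuously to $t=T$ with $y(T,x)=0$. This follows from the Leray energy inequality in Theorem~\ref{Thm-well-stab}: for any $t\in[T_n,T]$,
\begin{equation*}
\|y(t)\|_\calH^2\leq \|y(T_n)\|_\calH^2+ C_0(T-T_n)\|f\|_{L^\infty(T_n,T;\calH)}^2,
\end{equation*}
and both terms on the right tend to $0$ as $n\to\infty$, so $y(t)\to 0$ uniformly on $[T_n,T]$, forcing $y(T,x)=0$. The main obstacle is the delicate balance between the exponential cost $e^{C_2\sqrt{\lambda_n}}$ of each local stabilization and the super-exponential decay accumulated from previous intervals; choosing $\lambda_n\sim 4^n/T^2$ and $\Delta_n\sim T/2^{n+1}$ with the specific constant $Q$ from \eqref{def-C2} and the subsequent block is essentially the unique (up to constants) way to match both the smallness propagation and the sharp $e^{C_3/T}$ control cost simultaneously.
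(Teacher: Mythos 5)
Your proposal is correct and follows essentially the same route as the paper: a piecewise-in-time concatenation of the rapid-stabilization feedback laws $\mathcal{F}_{\lambda_n}$ on the dyadic intervals $[T_n,T_{n+1})$ with $\lambda_n\sim 4^n/T^2$, propagation of the smallness condition $\|y(T_n)\|_{\calH}\leq r_{\lambda_n}$ via the choice of $Q$, and the resulting $e^{C_3/T}$ cost bound dominated by the first interval. The only differences are cosmetic (you treat general $T\in(0,1)$ directly and shift $\lambda_n$ by one dyadic step, and you phrase the smallness propagation as an explicit induction rather than the paper's assume-then-verify bootstrap), and your numerical checks are consistent.
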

\begin{proof}
 For the ease of presentation, we only consider the case when $1/T= 2^{n_0}$ with $n_0\in N^*$.  The other cases can  be  treated via  time transition, $i. e. $ if $T\in (2^{-m-1}, 2^{-m})$ then  we simply let the feedback law $U(t;y):=0$ on the time interval  $[2^{-m-1}, T]$.  More precisely, we consider the following  partition of $[0, T]$ and piecewise feedback laws,
\begin{gather}\label{def-In-t}
T_{n}:= 2^{-n_0}\left(1- \frac{1}{2^n}\right), \; I_n:= [T_n, T_{n+1}), \; \lambda_n:= Q^2 2^{2(n_0+n)} \textrm{ for any }  n\geq 0,\\
\textit{for any $n\geq 0$  we consider the control  (feedback law) as $\mathcal{F}_{\lambda_n}$ on interval $I_n$.}  
\end{gather}

\noindent\textbf{Control design.}
\begin{itemize}
\item[\textit{Step 1.}]  Let the constant  $R_T>0$ be sufficiently small to be fixed later on.   First,  for  $||y_0||_{\calH}\leq R_T$,  on the interval  $I_{0}$ we consider the closed-loop system \eqref{cons1}--\eqref{cons-f}  with feedback law $U:= \mathcal{F}_{\lambda_{0}}$ and  initial data $y(0, x)= y_0(x)$.   Assuming that $R_T\leq r_{\lambda_0}$, then    according to Theorem \ref{thm-rap-sta-li} the closed-loop system has a unique solution $\widetilde{y}|_{\bar{I}_{0}}$ that decays exponentially with decay rate $\lambda_0/4$.
\item[\textit{Step 2.}]  Next, we consider  the closed-loop system  with feedback law $\mathcal{F}_{\lambda_{1}}$ and $y(T_{1}, x):= \widetilde{y}(T_1, x)$ on $I_{1}$.  Again we assume  $||y(T_1)||\leq r_{\lambda_1}$ to find a unique solution $\widetilde{y}|_{\bar{I}_{1}}$ that is exponentially stable.
\item[\textit{Step 3.}] By continuing this procedure on $I_n$ and by always assuming $||y(T_n)||\leq r_{\lambda_n}$, we   find a stable  solution $\widetilde{y}|_{I_n}$. 
 \item[\textit{Step 4.}] We denote this constructed  solution $\widetilde{y}|_{[0, T)}\in C^0([0, T); \calH)$ by $y|_{[0, T)}$.
 \item[\textit{Step 5.}]  For some sufficiently small $R_T$ we  prove that $||y(T_n)||$ is  indeed smaller than $r_{\lambda_n}$ for every $n\in \mathbb{N}$,  and show that the solution tends to zero as   $y(T):= \lim_{t\rightarrow T^-} y(t)=0$.
  \item[\textit{Step 6.}]  Eventually,  thanks to Step 5, $y|_{[0, T]}$  is the unique  solution of the Cauchy problem  \eqref{Cauch-NS}  with the control  term $f$ given by $f|_{I_n}:= \mathcal{F}_{\lambda_{n}} y|_{I_n},  \forall n\geq 0$, which satisfies $y(T)=0$.
 \item[\textit{Step 7.}]  We calculate  precise cost estimates.
\end{itemize}

First we \textit{assume}  that for every $I_n$ the value $||y(T_n)||_{L^2}$ is smaller than $r_{\lambda_n}$,  which,  together with   Theorem \ref{thm-rap-sta-li}, implies that  the solution $y|_{I_n}$ verifies
\begin{align}
||y(t)||_{L^2(\Omega)}&\leq C_1 e^{C_1 Q 2^{n_0+n}} e^{-\frac{Q^2}{4}2^{2(n_0+n)} (t-T_n)} ||y(T_n)||_{L^2(\Omega)}, \; \forall t\in I_n,\label{yt1} \\
||\mathcal{F}_{\lambda_n}y(t)||_{L^2(\Omega)}&\leq C_2 e^{C_2 Q 2^{n_0+n}} e^{-\frac{Q^2}{4}2^{2(n_0+n)} (t-T_n)} ||y(T_n)||_{L^2(\Omega)}, \; \forall t\in I_n. \label{ft1}
\end{align}
Consequently,  for every $n\geq 1$ the value of  $y(T_n)$ is dominated by
\begin{align}\label{es-111}
||y(T_n)||_{L^2(\Omega)}&\leq \left(\prod_{k=0}^{n-1} C_1 e^{C_1\sqrt{\lambda_k}} e^{-\frac{\lambda_k}{4} 2^{-(n_0+k+1)} } \right) ||y_0||_{L^2(\Omega)},  \notag\\
&= \left(\prod_{k=0}^{n-1} C_1 e^{C_1Q 2^{n_0+k}}  e^{-\frac{Q^2}{8} 2^{n_0+k}}\right) ||y_0||_{L^2(\Omega)}, \notag\\
&\leq \left(\prod_{k=0}^{n-1} e^{\frac{Q^2}{64} 2^{n_0+k}}  e^{-\frac{Q^2}{8} 2^{n_0+k}}\right) ||y_0||_{L^2(\Omega)}, \notag\\
&= \left(\prod_{k=0}^{n-1}  e^{-\frac{7Q^2}{64} 2^{n_0+k}}  \right) ||y_0||_{L^2(\Omega)}, \notag\\
&= \exp \left(-\frac{7Q^2}{64}  2^{n_0}(2^n-1)\right) ||y_0||_{L^2(\Omega)}.
\end{align}
Observe that the above inequality also holds for $n=0$.
Furthermore, for  any $n\geq 1$ and  for any $t\in I_n$  the control term is bounded by 
\begin{equation}\label{es-222}
||\mathcal{F}_{\lambda_n}y(t)||_{L^2(\Omega)}\leq C_2 e^{C_2 Q 2^{n_0+n}}  ||y(T_n)||_{L^2(\Omega)}\leq  \exp \left(-\frac{5Q^2}{64}  2^{n_0+n-1}\right) ||y_0||_{L^2(\Omega)},
\end{equation}
Clearly, the right hand side of the inequalities \eqref{es-111} and \eqref{es-222} tend to 0 as $n$ tends to $\infty$.   Therefore, it suffices to prove the assumption  $||y(T_n)||_{L^2}\leq r_{\lambda_n}$ to close the ``bootstrap" and to conclude the null controllability.
By recalling the definitions of $\lambda_n$, $r_{\lambda_n}$, and $Q$ we know that  
\begin{equation*}
e^{-\frac{Q^2}{64} 2^{n_0+n}}\leq (C_2 e^{C_2 Q 2^{n_0+n}})^{-1}= (C_2 e^{C_2 \sqrt{\lambda_n}})^{-1}=  r_{\lambda_n}, \;\forall \; n\in \mathbb{N}.
\end{equation*}
Hence, it  suffices to find some $R_T>0$ such that  
\begin{align}\label{cond-RT}
R_T \exp \left(-\frac{7Q^2}{64}  2^{n_0}(2^n-1)\right) \leq e^{-\frac{Q^2}{64} 2^{n_0+n}}\leq r_{\lambda_n}, \;\forall \; n\in \mathbb{N}.
\end{align}
Thus one can  take 
\begin{equation}
R_T:= e^{-\frac{Q^2}{32} 2^{n_0}}= e^{-\frac{Q^2}{32T}}= e^{-\frac{C_3}{T}} \textrm{ where } C_3= \frac{Q^2}{32}.
\end{equation}

It only remains to estimate the controlling cost.  Thanks to \eqref{es-222} we know that 
\begin{equation*}
||f(t)||_{L^2(\Omega)}\leq ||y_0||_{L^2(\Omega)}, \; \forall \; t\in [T_1, T].
\end{equation*}
As for $t\in [0, T_1)$ and the control $f|_{I_0}(t)$,  we have 
\begin{equation*}
||\mathcal{F}_{\lambda_0}y(t)||_{L^2(\Omega)}\leq C_2 e^{C_2 Q 2^{n_0}}  ||y_0||_{L^2(\Omega)}\leq e^{\frac{Q^2}{64} 2^{n_0} } ||y_0||_{L^2(\Omega)}\leq e^{\frac{C_3}{T}} ||y_0||_{L^2(\Omega)}.
\end{equation*}
 In conclusion,  for any $||y_0||_{\calH}\leq e^{-\frac{C_3}{T}}$,   the constructed solution $y(t, x)$ and control $f(t, x)$ satisfy
 \begin{gather*}
 ||y(t, \cdot)||_{L^2(\Omega)} \textrm{ and }||f(t, \cdot)||_{L^2(\Omega)}\longrightarrow 0^+, \textrm{ as } t\rightarrow T^-, \\
 ||y(t, \cdot)||_{L^2(\Omega)} \textrm{ and }||f(t, \cdot)||_{L^2(\Omega)}\leq e^{\frac{C_3}{T}}||y_0||_{L^2(\Omega)}, \; \forall\; t\in [0, T].
 \end{gather*}
\end{proof}
\begin{remark}\label{rmk-r2}
If we replace the linear feedback laws $\{\mathcal{F}_{\lambda_n} y\}_{n=1}^{\infty}$ by $\{\mathcal{K}_{r_{\lambda_n}}\big(\mathcal{F}_{\lambda_n} y\big)\}_{n=1}^{\infty}$ on interval $I_n$, then similar result holds.  Indeed,  according to Theorem \ref{thm-rap-sta-li-K} it suffices to  find  some initial state such that   for  every  $n\in \mathbb{N}$ the value of $||y(T_n)||$ is smaller than $r^2_{\lambda_n}$. More precisely,  instead of taking some $R_T>0$ that satisfies  \eqref{cond-RT}, one only needs to find  $\widetilde{R}_T:= e^{-\frac{Q^2}{16T}}= e^{-\frac{2C_3}{T}}$ satisfying 
\begin{align*}
\widetilde{R}_T \exp \left(-\frac{7Q^2}{64}  2^{n_0}(2^n-1)\right) \leq e^{-\frac{Q^2}{32} 2^{n_0+n}}\leq r_{\lambda_n}^2, \;\forall \; n\in \mathbb{N},
\end{align*}
to  guarantee that  for every $n\in \mathbb{N}$ we have  $||y(T_n)||_{L^2}\leq r_{\lambda_n}^2$.
\end{remark}

\section{Small-time local stabilization}\label{sec-finite}
As in the preceding section, we only focus  on the case when  $T= 1/2^{n_0}$ with $n_0$ be  integer.   We also  adapt the same construction of $T_n$ and $\lambda_n$ given by  \eqref{def-In-t}  in Section \ref{sec-null}.

\begin{theorem}[Small-time local stabilization of Navier-Stokes equations]\label{semi-stab}
Let $T= 1/2^{n_0}$ with $n_0\in \mathbb{N}^*$.   The following  $T$-periodic feedback law $U(t; y): \R\times \calH\rightarrow \calH$ satisfying \eqref{cond-K-F},
\begin{gather}\label{feed-li-stab}
U\big{|}_{[0, T)\times \calH}(t; y):= 
\mathcal{K}_{r_{\lambda_n}}\left(\mathcal{F}_{\lambda_n} y\right), \; \forall \; y\in \calH,  \forall\; t\in I_n, \forall\;  n\in \mathbb{N},
\end{gather}
is a proper feedback law for  system \eqref{cons1}--\eqref{cons-f}. 
 Moreover, for some effectively computable constant $\Lambda_T$ this feedback law stabilizes system \eqref{cons1}--\eqref{cons-f} in finite time:
\begin{itemize}
\item[(i)] ($2T$ stabilization) $\Phi(2T+ t, t; y_0)=0, \;\;\forall \;t\in \mathbb{R},\; \forall\; ||y_0||_{\calH}\leq \Lambda_T.$
\item[(ii)] (Uniform stability)
For every  $\delta> 0$, there exists an effectively computable $\eta> 0$ such that
\begin{equation*}
\big(||(y_0||_{\calH}\leq \eta\big) \Rightarrow \left(||\Phi(t, t'; y_0)||_{\calH}\leq \delta, \;\forall \;  t'\in \R,\; \forall\; t\in ( t', +\infty) \right).
\end{equation*}
\end{itemize}
\end{theorem}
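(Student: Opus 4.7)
The plan is to combine the quantitative null controllability of Section \ref{sec-null} with a purely energetic decay estimate that makes the cutoff feedback contractive on small states; the $T$-periodicity then closes the loop within two cycles.

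\textbf{Proper feedback law.} On each $I_n \subset [0, T)$ the map $y \mapsto \mathcal{K}_{r_{\lambda_n}}(\mathcal{F}_{\lambda_n} y)$ is bounded by $1$ in $\calH$ and globally Lipschitz, so Theorem \ref{thm-clo-sta-F} produces a unique Leray solution on $I_n$ starting from any state in $\calH$. I would concatenate these inductively on $I_0, I_1, \dots$ and use the uniform bound $\|U(t;y)\|_\calH \le 1$ with Theorem \ref{Thm-well-stab} to obtain uniform $L^\infty(0,T;\calH)\cap L^2(0,T;\calVs)$ control of the trajectory; the standard two-dimensional Leray--Hopf machinery then extends the trajectory continuously up to $t = T$, and $T$-periodicity plus time translation deliver a proper flow on all of $\mathbb R$ in the sense of Definition \ref{defproper}.

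\textbf{A priori decay of small states.} The central a priori estimate says that whenever $\|y(t_0)\|_\calH \le \tau_1^2/8$ the closed-loop solution satisfies $\|y(t)\|_\calH \le e^{-\tau_1(t-t_0)/2}\|y(t_0)\|_\calH$ for all $t \ge t_0$, where $\tau_1$ is the first Stokes eigenvalue. Indeed, the sublinear cutoff bound \eqref{cond-K-F} gives $\|U(t;y)\|_\calH \le \sqrt{2\|y\|_\calH}$, and the energy identity together with $\mathcal{B}(y,y,y)=0$ and Poincar\'e's inequality $\|\nabla y\|_{L^2}^2 \ge \tau_1 \|y\|_\calH^2$ yields
\[
\frac{d}{dt}\|y\|_\calH^2 \;\le\; -2\tau_1\|y\|_\calH^2 + 2\sqrt{2}\,\|y\|_\calH^{3/2} \;\le\; -\tau_1\|y\|_\calH^2
\]
as soon as $\|y\|_\calH \le \tau_1^2/8$; the induced decay preserves this smallness so the estimate propagates for all later time.

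\textbf{Finite-time and uniform stabilization.} Set $\Lambda_T := \min\{\tau_1^2/8,\ \widetilde R_T\}$ with $\widetilde R_T := e^{-2C_3/T}$ from Remark \ref{rmk-r2}. For $\|y_0\|_\calH \le \Lambda_T$ and any start time $t \in [kT,(k+1)T)$, the decay estimate gives $\|y((k+1)T)\|_\calH \le \|y_0\|_\calH \le \widetilde R_T$; on $[(k+1)T,(k+2)T)$ the feedback coincides exactly with the piecewise cutoff protocol of Remark \ref{rmk-r2}, which drives the state to zero at time $(k+2)T$. Since zero is a fixed point of the closed-loop system, this proves (i). For (ii), the decay controls the trajectory by $\|y_0\|_\calH$ during the first (partial) period and Theorem \ref{thm-null-col-opt} controls it by $e^{C_3/T}\|y_0\|_\calH$ during the second; choosing $\eta := \min\{\Lambda_T,\ \delta e^{-C_3/T}\}$ then enforces $\|\Phi(t', t; y_0)\|_\calH \le \delta$ throughout.

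\textbf{Main obstacle.} The heart of the argument is the decay estimate of the second paragraph: it is precisely the sublinear character of the cutoff bound $\|U(t;y)\|_\calH \lesssim \|y\|_\calH^{1/2}$, rather than a linear bound in $\|y\|$, that lets the feedback's contribution be absorbed into the Stokes dissipation via Poincar\'e for small states. This structural feature of the $\mathcal{K}_{r_{\lambda_n}}$ cutoff is what allows the trajectory, irrespective of its phase $s \in [0,T)$ inside the current period, to arrive at the next period boundary within the null-controllability threshold $\widetilde R_T$, after which the bootstrap of Section \ref{sec-null} takes over.
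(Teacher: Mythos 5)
Your overall architecture (properness via Theorem \ref{thm-clo-sta-F} and concatenation, null controllability over one full period via Remark \ref{rmk-r2}, a smallness estimate for trajectories started mid-period, then $2T$-stabilization and uniform stability) matches the paper's. The problem is that your central a priori decay estimate is false, and the sign of the error is exactly opposite to what you claim. From $\frac{d}{dt}\|y\|_{\calH}^2\le -2\tau_1\|y\|_{\calH}^2+2\sqrt{2}\,\|y\|_{\calH}^{3/2}$, absorbing the forcing term into the dissipation requires $2\sqrt{2}\,\|y\|_{\calH}^{3/2}\le \tau_1\|y\|_{\calH}^{2}$, i.e. $\|y\|_{\calH}\ge 8/\tau_1^2$ --- a \emph{largeness} condition, not the smallness condition $\|y\|_{\calH}\le\tau_1^2/8$. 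Since $\|y\|_{\calH}^{3/2}=\|y\|_{\calH}^{2}\cdot\|y\|_{\calH}^{-1/2}$, a bound of the form $\|U\|_{\calH}\le\sqrt{2\|y\|_{\calH}}$ is \emph{worse} than a linear bound near $y=0$: for small states the term $2\sqrt{2}\,\|y\|_{\calH}^{3/2}$ dominates $2\tau_1\|y\|_{\calH}^2$, the right-hand side of your differential inequality is positive, and no decay (indeed nothing better than $\|y(t)\|_{\calH}\le\max\{\|y_0\|_{\calH},\,2/\tau_1^2\}$) follows from \eqref{cond-K-F} alone. Your ``main obstacle'' paragraph, which presents this sublinearity as the structural reason the argument works, rests on this sign error, and everything downstream is unsupported: the choice $\Lambda_T=\min\{\tau_1^2/8,\ \widetilde R_T\}$, the claim that a trajectory started at an arbitrary phase $s\in[kT,(k+1)T)$ arrives at time $(k+1)T$ with norm at most $\widetilde R_T$, and the choice of $\eta$ in (ii).

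The paper closes this gap by using the actual structure of the feedback rather than only the bound \eqref{cond-K-F}. It fixes $\widetilde T<T$ close to $T$ and splits the period: for starting times $s\in[\widetilde T,T)$ it uses only $\|f\|_{\calH}\le 1$ together with the energy inequality of Theorem \ref{Thm-well-stab}, so the squared norm can grow by at most $C_0(T-\widetilde T)$, which is small because the remaining time is short; for $s\in[0,\widetilde T)$ only finitely many intervals $I_n$ are met, and on each of them Theorem \ref{thm-rap-sta-li-K} (local exponential stability of the closed-loop system with the cutoff feedback, which exploits that $\mathcal{K}_{r_{\lambda_n}}(\mathcal{F}_{\lambda_n}y)=\mathcal{F}_{\lambda_n}y$ along sufficiently small solutions) prevents the norm from increasing, so a finite composition yields the required $\varepsilon(\widetilde\eta)$. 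A single uniform estimate over all of $[0,T)$ is genuinely delicate here because $r_{\lambda_n}\to 0$ and the operator norms $\gamma_{\lambda_n}$ of $\mathcal{F}_{\lambda_n}$ blow up as $n\to\infty$; this is precisely why the period must be split at $\widetilde T$. If you repair your mid-period step along these lines, the remainder of your argument (properness, and the application of Remark \ref{rmk-r2} on the following full period) goes through as in the paper.
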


\begin{proof}[Proof of Theorem \ref{feed-li-stab}]
We mimic the prove of the  finite time stabilization of the  heat equations  \cite{2017-Coron-Nguyen-ARMA,  Xiang-heat-2020}, as relatively standard, see also \cite{Coron-Xiang-2018, 2019-xiang-SICON, 2017-Xiang-SCL} for similar results.   The proof is followed by five steps:
\begin{itemize}
\item[\textit{Step 1.}]  The feedback law $U$ is a  proper feedback law, $i. e.$ for any $y_0\in\calH$ and for any initial time $s\in \R$ there exists a unique global (in time) solution.
\item[\textit{Step 2.}]  Null controllability: $\Phi(T , 0 ; y_0)=0$ for any $y_0$ satisfying $||y_0||_{\calH}\leq \widetilde{R}_T= e^{-\frac{2C_3}{T}}$. Moreover, 
\begin{equation}\label{es-1111}
||\Phi(t, 0; y_0)||_{\calH}\leq e^{\frac{C_3}{T}} ||y_0||_{\calH}, \; \forall \; ||y_0||_{\calH}\leq  e^{-\frac{2C_3}{T}}, \;\forall \;t\in [0, T].
\end{equation}
\item[\textit{Step 3.}]  For any   $\widetilde{\eta}> 0$, there exists some $\varepsilon(\widetilde{\eta})\in (0, \widetilde{\eta})$ such that 
\begin{equation}\label{es-2222}
||\Phi(t, s; y_0)||_{L^2(\Omega)}\leq  \widetilde{\eta}, \;\forall \;||y_0||_{L^2(\Omega)}\leq \varepsilon(\widetilde{\eta}), \;\forall \;s\in [0, T), \;\forall \;t\in [s, T].
\end{equation}
\item[\textit{Step 4.}]  $2T$ stabilization:  $\Phi(2T , s; y_0)=0$,   for any $s\in [0, T)$,  for any $y_0$ satisfying $||y_0||_{\calH}\leq \varepsilon
\left(e^{-\frac{2C_3}{T}}\right)=: \Lambda_T$.
\item[\textit{Step 5.}]  Uniform stability as direct consequence of Step 2--4.
\end{itemize}

Step 1.  It suffices to prove that for any $s\in [0, T)$ the closed-loop system has a unique solution on $[s, T]$.  Indeed, thanks to Theorem \ref{thm-clo-sta-F} there exists a unique solution on $I_n$ for any $I_n$ that intersects with $[s, T)$. Hence we find a unique solution $y$ in $C^0([s, T); \calH)\cap L^2_{loc}(s, T; \calVs)$. Observe that the control (provided by the related feedback law) is smaller than $1$, $i.e. ||f(t, x)||_{L^2(s, T; \calH)}\leq \sqrt{T}$. Theorem \ref{Thm-well-stab} implies that the solution $y$ is indeed in $C^0([s, T]; \calH)\cap L^2(s, T; \calVs)$.  Finally, thanks to Theorem \ref{Thm-well-stab} again, the unique solution $y$  never blow up,
\begin{equation*}
||y(t, x)||_{\calH}^2+ ||\nabla y(t, x)||_{L^2(s, t; L^2)}^2\leq ||y_0||_{\calH}^2+ C_0 (t-s), \;\forall \; t\in (s, +\infty).
\end{equation*}

Step 2.  This step is a consequence of Theorem \ref{thm-null-col-opt} and Remark \ref{rmk-r2}.

Step 3. Thanks to the fact that $||f(t, x)||_{\calH}\leq 1$ and  Theorem \ref{Thm-well-stab}, there exists $\tilde{T}\in (0, T)$ such that 
\begin{equation*}
||\Phi(t, s; y_0)||_{\calH}\leq  \widetilde{\eta}, \;\forall \;||y_0||_{\calH}\leq \widetilde{\eta}/2, \;\forall \;s\in [\tilde{T}, T), \;\forall \;t\in [s, T].
\end{equation*}
Observe that  the feedback law $U$ on $[0, \tilde{T})$ is composed  by finitely many stationary feedback laws on intervals $\{I_n\}$, while, thanks to  Theorem \ref{thm-rap-sta-li-K}, on each  of these intervals $I_n$  the system is locally exponentially stable.  Consequently, there exists some $\varepsilon= \varepsilon(\widetilde{\eta})\in (0, \widetilde{\eta}/2)$ such that 
\begin{equation*}
||\Phi(t, s; y_0)||_{L^2(\Omega)}\leq  \widetilde{\eta}/2,\; \forall \;||y_0||_{L^2(\Omega)}\leq \varepsilon, \;\forall \;s\in [0, \tilde{T}), \;\forall \;t\in [s, \tilde{T}].
\end{equation*}

Step 4  is a trivial combination of   Step 2 and Step 3 by  taking $\varepsilon
\left(e^{-\frac{2C_3}{T}}\right)$. 

Step 5 follows directly from Step 2--4.
 \end{proof}
\noindent\textbf{Acknowledgments.}  The author would like to thank Jean-Michel Coron for having attracted his attention to this problem and for fruitful discussions. He also thanks Emmanuel Trélat for  valuable discussions on this problem.  
\bibliographystyle{plain} 
\bibliography{NS}

\end{document}